\newtheorem{theorem}{Theorem}[section]
\newtheorem{prop}[theorem]{Proposition}
\newtheorem{cor}[theorem]{Corollary}
\theoremstyle{definition}
\newtheorem{example}[theorem]{Example}
\theoremstyle{remark}
\newtheorem{remark}[theorem]{Remark}
\numberwithin{equation}{section}
\newcommand{\rr}{{\mathbb R}}
\newcommand{\rd}{{\mathbb R^d}}
\newcommand{\nat}{{\mathbb N}}
\newcommand{\ganz}{{\mathbb Z}}
\newcommand{\complex}{{\mathbb C}}
\newcommand{\Exp}{{\mathbb E}}
\newcommand{\Ker}{\operatorname{Ker}}
\newcommand{\fd}{{\,\stackrel{\rm fd}{=}\,}}
\renewcommand{\Re}{{\rm Re}}
\renewcommand{\Im}{{\rm Im}}
\newcommand{\dimp}{\dim_{\mathrm{P}}}
\begin{document}
\sloppy
\title[Asymptotic behavior of semistable L\'evy exponents]{Asymptotic behavior of semistable L\'evy exponents and applications to fractal path properties}

\thanks{The work of P.\ Kern was supported by Deutsche Forschungsgemeinschaft (DFG) under grant KE1741/6-1.  M.M. Meerschaert was partially supported by ARO grant W911NF-15-1-0562 and NSF grants DMS-1462156 and EAR-1344280. Y. Xiao was partially supported by NSF grants DMS-1307470, DMS-1309856 and DMS-1612885}

\author{P.\ Kern}
\address{Peter Kern, Mathematisches Institut, Heinrich-Heine-Universit\"at D\"usseldorf, Universit\"atsstr. 1, D-40225 D\"usseldorf, Germany}
\email{kern\@@{}hhu.de}

\author{M.M.\ Meerschaert}
\address{Mark M.\ Meerschaert, Department of Statistics and Probability, Michigan State University, East Lansing, MI 48824}
\email{mcubed\@@{}stt.msu.edu}

\author{Y.\ Xiao}
\address{Yimin Xiao, Department of Statistics and Probability, Michigan State University, East Lansing, MI 48824}
\email{xiao\@@{}stt.msu.edu}

\date{\today}

\begin{abstract}
This paper proves sharp bounds on the tails of the L\'evy exponent of an operator semistable law on $\rd$. These bounds are then applied to explicitly compute the Hausdorff and packing dimensions of the range, graph, and other random sets describing the sample paths of the corresponding operator semi-selfsimilar L\'evy processes.  The proofs are elementary, using only the properties of the L\'evy exponent, and certain index formulae.
\end{abstract}

\keywords{L\'evy exponent, operator semistable process, semi-selfsimilarity, Hausdorff dimension, packing dimension, range, graph, multiple points, recurrence, transience}
\subjclass[2010]{Primary 60E10, 60G51; Secondary 28A78, 28A80, 60E07, 60G17, 60G18, 60G52.}

\maketitle

\baselineskip=18pt

\section{Introduction}\label{sec1}

Let $X=\{X(t)\}_{t\geq0}$ be a {\it L\'evy process} in $\rd$, i.e.\ a stochastically continuous process with c\`adl\`ag paths that has stationary and independent increments and starts at the origin, i.e.\ $X(0)=0$ almost surely. The distribution of $X$ on the path space is uniquely determined by the distribution of $X(1)$ which can be an arbitrary infinitely divisible distribution in $\rd$. The L\'evy process $X$ is called {\it operator semistable} if the (infinitely divisible) distribution $\mu$ of $X(1)$ is {\it full}, i.e.\ not supported on any lower dimensional hyperplane, and fulfills
\begin{equation}\label{oss}
\mu^c=c^E\mu\ast\varepsilon_u
\end{equation}
for some fixed $c>1$, $u\in\rd$ and some linear operator $E$ on $\rd$, where $\mu^c$ denotes the $c$-fold convolution power of $\mu$, $c^E\mu(dx)=\mu(c^{-E}dx)$ is the image measure of $\mu$ under the exponential operator $c^E=\sum_{n=0}^\infty\frac{(\log c)^n}{n!}\,E^n$, and $\varepsilon_u$ denotes the Dirac measure at the point $u\in\rd$. Operator semistable distributions were introduced by Jajte \cite{Jaj}; further early investigations can be found in \cite{LR,Luc,Cho}. In case $u=0$ the distribution $\mu$, respectively the L\'evy process $X$ generated by $\mu$, is called {\it strictly} operator semistable. Any {\it exponent} $E$ is invertible, and any eigenvalue $\lambda$ of $E$ further fulfills $\Re(\lambda)\geq\frac12$, where $\Re(\lambda)=\frac12$ indicates a Gaussian component \cite[Theorem 7.1.10]{MS}.  We refer to the monograph \cite{MS} for a comprehensive overview on operator semistable distributions and their connection to limit theorems. As an easy consequence of \eqref{oss}, a strictly operator semistable L\'evy process $X$ is also {\it strictly operator semi-selfsimilar}, i.e.,
\begin{equation}\label{soss}
\{X(ct)\}_{t\geq0}\fd\{c^EX(t)\}_{t\geq0},
\end{equation}
where $\fd$ denotes equality of all finite-dimensional marginal distributions of the processes. The class of operator semi-selfsimilar processes is much larger than that of the semi-selfsimilar processes in the literature; see Maejima and Sato \cite{MaeSat} and the references therein for more information. By induction we clearly have $\{X(c^kt)\}_{t\geq0}\fd\{c^{kE}X(t)\}_{t\geq0}$ for all $k\in\ganz$. If \eqref{soss} even holds for all $c>0$ the L\'evy process $X$ is called {\it strictly operator selfsimilar} and the distribution of $X(t)$ is {\it strictly operator stable} \cite[Definition 3.3.24]{MS}.  If $E$ is a scalar multiple of the identity, then an operator (semi-)stable law is called (semi-)stable, and an operator (semi-)selfsimilar process is called (semi-)selfsimilar.  The operator scaling allows the tail behavior to vary with the coordinate, in an arbitrary coordinate system \cite[Theorem 7.1.18]{MS}. This is important for many  applications, including portfolio modeling in finance \cite{portfolio}, pollution plumes in heterogeneous porous media \cite{RW2d}, and diffusion tensor imaging \cite{anisoDTI}.  Hence operator semi-selfsimilarity generalizes the space-time scaling of selfsimilarity to a discrete scale, and allows spatial scaling by linear operators, which gives more flexibility in modeling. We refer to \cite{Sor} for several concrete applications of discrete scale-invariant phenomena from physics.

We remark that operator semistable L\'evy processes are special cases of
\emph{group self-similar processes} introduced by Kolody\'{n}ski
and Rosi\'{n}ski \cite{KR03}. To recall their definition, let $G$ be a group
of transformations of a set $T$ and, for each $(g, t) \in G\times T$,
let $C(g, t): \rd \to \rd$ be a bijection such that
$$C(g_1g_2, t) = C(g_1, g_2(t))\circ C(g_2, t), \quad\text{ for all }g_1, g_2 \in G
\hbox{ and } t \in T,$$
and $C(e, t)= I$. Here $e$ is the unit element of $G$ and $I$ is the identity
operator on $\rd$. In other words, $C$ is a cocycle for the group action
$(g, t)\mapsto g(t)$ of $G$ on $T$. According to Kolody\'{n}ski
and Rosi\'{n}ski \cite{KR03}, a stochastic process $\{Y(t), t \in T\}$
taking values in $\rd$ is called \emph{$G$-self-similar with cocycle $C$} if
\begin{equation}\label{Eq:GSS1}
\{Y\big(g(t)\big),\, t\in T\}\fd \{C(g, t)Y(t),\, t\in T\}.
\end{equation}
In the setting of this paper, we take $T = [0, \infty)$ and
$G = \{g_k: k \in\ganz \}$, where $g_k$ is the transformation on $T$
defined by $g_k (t) = c^k t$. Thus $G$ is a subgroup of linear transformations
on $T$  and we may identify $g_k$ with $c^k$. It is clear that a strictly operator
semi-selfsimilar process $X= \{X(t)\}_{t \ge 0}$ is $G$-self-similar
with cocycle $C$, where for each $g_k \in G$ and $t \ge 0$ the cocykle $C(g_k,t): \rd \to \rd$
is defined by $C(g_k, t)(x) = c^{kE}$. Note that $C(g_k,t)$ is a bijection since $X$
is proper and $c > 1$. Thus, operator semistable L\'evy processes can also be
studied by using the general framework of Kolody\'{n}ski
and Rosi\'{n}ski \cite{KR03} and methods from ergodic theory, but this goes
beyond the scope of the present paper.

We will need the following spectral decomposition of the exponent $E$
as laid out in \cite{MS}. Factor the minimal polynomial of $E$ into
$f_{1}(x)\cdot\ldots\cdot f_{p}(x)$ such that every root of $f_{j}$
has real part $a_{j}$, where $a_1<\cdots<a_p$ are the distinct real parts of the eigenvalues of $E$ and $a_1\geq\frac12$ by 
Theorem 7.1.10 in  \cite{MS}. According to  Theorem 2.1.14 in \cite{MS} we can decompose $\rd$ into a direct sum 
$\rd=V_{1}\oplus\ldots\oplus V_{p}$, where $V_{j}=\Ker(f_{j}(E))$ are $E$-invariant subspaces. Now, in an appropriate basis, 
$E$ can be represented as a block-diagonal matrix $E=E_{1}\oplus\ldots\oplus E_{p}$, where $E_{j}:V_{j}\rightarrow V_{j}$ 
and every eigenvalue of $E_{j}$ has real part $a_{j}$. Especially, every $V_j$ is an $E_j$-invariant subspace of dimension 
$d_j=\dim V_j$. Now we can write $x\in\rd$ as $x=x_1+\cdots+x_p$ and $t^Ex=t^{E_1}x_1+\cdots+t^{E_p}x_p$ with respect 
to this direct sum decomposition, where $x_j\in V_j$ and $t>0$. Moreover, for the strictly operator semistable L\'evy process 
we have $X(t)=X^{(1)}(t)+\ldots+X^{(p)}(t)$ with respect to this direct sum decomposition, where $\{X^{(j)}(t)\}_{t\geq0}$ is an 
operator semistable L\'evy process on $V_j\cong \rr^{d_j}$ with exponent $E_j$ by  Lemma 7.1.17 in \cite{MS}. We can 
further choose an inner product on $\rd$ such that the subspaces $V_j$, $1\leq j\leq p$, are mutually orthogonal and throughout 
this paper for $x\in\rd$ we may choose $\|x\|=\langle x,x\rangle^{1/2}$ as the associated Euclidean norm on $\rd$. With this 
choice, in particular we have
\begin{equation}
	\|X(t)\|^2=\|X^{(1)}(t)\|^{2}+\ldots+\|X^{(p)}(t)\|^{2}
\end{equation}
for all $t\geq0$. Exponents need not be unique, because of possible symmetries \cite[Remark 7.1.22]{MS}. However, since the real parts of the eigenvalues determine the tail behavior of $\mu$ \cite[Theorem 7.1.18]{MS}, the spectral decomposition is the same for any exponent.   In case $d=1$ a spectral decomposition is superfluous and we simply say that $X$ is a strictly {\it $\alpha$-semistable L\'evy process} with $\alpha=\alpha_1=a_1^{-1}=E^{-1}\in(0,2]$.

Since densities of operator semistable distributions exist but are in general not explicitly known, to show properties of operator semistable processes it is often natural to use Fourier transforms which are given in terms of the L\'evy-Khintchine representation.

Our interest is focused on the asymptotic behavior of the {\it L\'evy exponent} $\psi:\rd\to\complex$ which is the unique continuous function with $\psi(0)=0$ and $\Exp[\exp(i\langle\xi,X(t)\rangle)]=\exp(-t\psi(\xi))$ given by the L\'evy-Khintchine formula
$$\psi(\xi)=i\langle\xi,b\rangle+\frac12\langle\xi,\Sigma\xi\rangle+\int_{\rd\setminus\{0\}}\left(1-e^{i\langle\xi,x\rangle}+\frac{i\langle\xi,x\rangle}{1+\|x\|^2}\right)\phi(dx)$$
for some unique $b\in\rd$, a symmetric and non-negative definite $\Sigma\in\rr^{d\times d}$, and a L\'evy measure $\phi$. The latter is a $\sigma$-finite Borel measure on $\rd\setminus\{0\}$ satisfying $$\int_{\rd\setminus\{0\}}\min\{1,\|x\|^2\}\phi(dx)<\infty.$$
Our aim is to derive upper and lower bounds for the real and imaginary part of the L\'evy exponent $\psi$ in terms of the spectral decomposition. These results are presented in Section 2 and enable us to prove upper and lower bounds for $\Re((1+\psi(\xi))^{-1})$ in the operator semistable setup, generalizing the corresponding result for operator stable L\'evy processes given in Proposition 4.1 of \cite{MX}. The quantity $\Re((1+\psi(\xi))^{-1})$ appears in various formulas for the Hausdorff and packing dimensions of certain random sets that describe the sample path behavior of a L\'evy processes. This enables us to give alternative analytic proofs for the Hausdorff and packing dimensions of the range and the graph of operator semistable L\'evy processes in Section 3. We will further show a connection to recurrence properties of operator semistable L\'evy processes and to the Hausdorff dimension of multiple points of their sample paths.

\section{Tail estimates for L\'evy exponents}

Suppose that $X =\{X(t)\}_{t\geq0}$ is operator semistable with exponent $E$.  Recall from Section \ref{sec1} that $a_1<\cdots<a_p$ are the distinct real parts of the eigenvalues of $E$, with $a_1\geq 1/2$, and define $\alpha_i=a_i^{-1}$ so that $2\geq \alpha_1>\cdots>\alpha_p$.  Now we can state the main technical result of this paper. 

\begin{theorem}\label{main}
Let $X=\{X(t)\}_{t\geq0}$ be a strictly operator semistable L\'evy process in $\rd$ with L\'evy exponent $\psi$. Then for every $\varepsilon>0$ there exists $\tau>1$ such that for some $K_i=K_i(\varepsilon,\tau)$ and $\|\xi\|>\tau$ we have
\begin{enumerate}
\item[(a)] $\displaystyle K_2\sum_{i=1}^p\|\xi_i\|^{\alpha_i}\leq\Re(\psi(\xi))\leq K_1\|\xi\|^{\varepsilon/2}\sum_{i=1}^p\|\xi_i\|^{\alpha_i},$
\item[(b)] $\displaystyle|\Im(\psi(\xi))|\leq K_3\|\xi\|^{\varepsilon/2}\sum_{i=1}^p\|\xi_i\|^{\alpha_i}.$
\end{enumerate}
\end{theorem}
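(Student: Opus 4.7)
The plan is to combine the strict scaling identity for $\psi$ with standard operator-norm estimates on each spectral block. From the semi-selfsimilarity relation \eqref{soss}, equating characteristic functions at $t=1$ yields $\psi(c^{E^\ast}\xi)=c\,\psi(\xi)$ and, by iteration,
$$\psi(c^{nE^\ast}\xi)=c^n\psi(\xi)\qquad\text{for every }n\in\ganz.$$
Because the spectral decomposition has been taken orthogonal, the adjoint splits as $E^\ast=E_1^\ast\oplus\cdots\oplus E_p^\ast$, every $V_i$ is $c^{nE^\ast}$-invariant, and $(c^{nE^\ast}\xi)_i=c^{nE_i^\ast}\xi_i$. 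I will use the standard uniform estimate that, for each $\delta>0$, $C_1 c^{n(a_i-\delta)}\|\eta\|\leq\|c^{nE_i^\ast}\eta\|\leq C_2 c^{n(a_i+\delta)}\|\eta\|$ holds for all $\eta\in V_i$ and $|n|$ sufficiently large (with sign conventions reversed for $n<0$). Together with continuity of $\psi$ and positivity of $\Re\psi$ away from $0$ (from fullness of $\mu$), this ensures that $c_0:=\inf\Re\psi$ is strictly positive and $C_0:=\sup|\psi|$ is finite on any fixed compact annulus $\{r_1\leq\|\cdot\|\leq r_2\}$.

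For the upper bound of (a) I treat the components individually: for each $i$, choose $n_i\in\ganz$ so that $c^{-n_iE_i^\ast}\xi_i\in V_i$ has norm in $[r_1,r_2]$; the scaling identity then gives $|\psi(\xi_i)|\leq C_0 c^{n_i}$, and the norm estimate with $\delta$ chosen small relative to $\varepsilon$ produces $c^{n_i}\leq C\|\xi_i\|^{\alpha_i+\varepsilon/2}\leq C\|\xi\|^{\varepsilon/2}\|\xi_i\|^{\alpha_i}$. Iterating the elementary subadditivity $\Re\psi(\xi+\eta)\leq 2\Re\psi(\xi)+2\Re\psi(\eta)$, which follows from $1-\cos(s+t)\leq 2(1-\cos s)+2(1-\cos t)$ combined with the convex bound $\langle\xi+\eta,\Sigma(\xi+\eta)\rangle\leq 2(\langle\xi,\Sigma\xi\rangle+\langle\eta,\Sigma\eta\rangle)$, over $i=1,\ldots,p$ then yields (a)-upper. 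For (b), since $\Im\psi$ has no analogous subadditivity, I instead pick a \emph{single} $n=n(\xi)\in\ganz$ with $\|c^{-nE^\ast}\xi\|\in[r_1,r_2]$, which exists because $n\mapsto\|c^{-nE^\ast}\xi\|$ decreases from $\infty$ to $0$ over $\ganz$ in geometric jumps. Then $|\psi(\xi)|=c^n|\psi(c^{-nE^\ast}\xi)|\leq C_0 c^n$, and analyzing the annulus condition blockwise gives $c^n\leq C\|\xi\|^{\varepsilon/2}\sum_i\|\xi_i\|^{\alpha_i}$, completing (b) and also giving an alternative route to (a)-upper.

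The lower bound of (a) is the delicate step. Reusing the single $n$ from above, I obtain $\Re\psi(\xi)=c^n\Re\psi(c^{-nE^\ast}\xi)\geq c_0 c^n$, so it suffices to show $c^n\geq C\sum_i\|\xi_i\|^{\alpha_i}$. Combining $\|c^{-nE_i^\ast}\xi_i\|\leq r_2$ with the lower side $\|c^{-nE_i^\ast}\xi_i\|\geq C_1 c^{-n(a_i+\delta)}\|\xi_i\|$ yields $c^n\geq C'\|\xi_i\|^{1/(a_i+\delta)}$ for each $i$; taking the maximum and using $\max_i\|\xi_i\|^{\alpha_i}\geq p^{-1}\sum_i\|\xi_i\|^{\alpha_i}$ produces a bound of the shape required but with exponent $1/(a_i+\delta)$ rather than $\alpha_i=1/a_i$. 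Removing this $\delta$-loss is the \textbf{main obstacle}, since the statement allows no $\|\xi\|^{\varepsilon}$-factor in the lower bound. My plan is to sharpen the norm estimate by tracking the precise polynomial factor $n^{m_i}$ coming from the Jordan form of $E_i^\ast$ in place of the crude $c^{\pm n\delta}$, observe that $n\asymp\log\|\xi\|$ so the correction is only sub-polynomial in $\|\xi\|$, and then balance $\varepsilon$, $\tau$, and the annulus $[r_1,r_2]$ so that the polynomial correction is absorbed into $K_2=K_2(\varepsilon,\tau)$; making this accounting work uniformly is the most technical part of the argument.
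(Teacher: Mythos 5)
Your scaling identity, the reduction to an annulus, and the upper bounds are essentially in the spirit of the paper's argument (the paper uses the relation \eqref{psioss} for arbitrary $t=c^{k(t)}m(t)$ rather than only $t=c^n$, which is a cosmetic difference). Two remarks on the upper bounds: your componentwise estimate $c^{n_i}\leq C\|\xi_i\|^{\alpha_i+\varepsilon/2}$ is only valid when $\|\xi_i\|$ is large; for components with $\|\xi_i\|$ small (even $\ll 1$ while $\|\xi\|>\tau$) the inequality reverses, and you must instead bound $\Re\psi(\xi_i)$ by a constant and absorb that constant using the fact that $\sum_i\|\xi_i\|^{\alpha_i}\geq(\|\xi\|/\sqrt p)^{\alpha_p}$ is bounded away from $0$ for $\|\xi\|>\tau$. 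This is patchable. Your single-rescaling route to (b) is fine and is closer to what the paper actually does.

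The genuine gap is exactly where you flagged it, and your proposed repair does not close it. Tracking the precise Jordan correction in place of $c^{\pm n\delta}$ yields, in the worst direction, only $\|\xi_i\|\leq C\,c^{na_i}n^{q(i)-1}$, hence $c^{n}\geq C\,n^{-M}\sum_i\|\xi_i\|^{\alpha_i}$ with $M=\max_i(q(i)-1)\alpha_i$ and $n\asymp\log\|\xi\|$. A factor $(\log\|\xi\|)^{-M}$ tends to $0$ and therefore cannot be ``absorbed into $K_2(\varepsilon,\tau)$'': the asserted lower bound admits no factor decaying in $\|\xi\|$, so sub-polynomiality of the loss is not enough. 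The paper avoids this by never passing through a two-sided operator-norm bound at all. It refines the spectral splitting into the order-$j$ subspaces $U_{ij}$, uses the exact asymptotics of $\|t^{-E^\ast}x\|$ from \cite[Theorem 2.2.4]{MS}, and writes down the explicit asymptotic inverse $t(r)$ in \eqref{asyinv}, in which every Jordan correction enters as a factor $(\log r)^{\alpha_i(j-1)}\geq 1$ multiplying $r^{\alpha_i}\|x_{ij}\|^{\alpha_i}$. With the signs thus resolved, the Jordan structure can only \emph{increase} $t(r)$ relative to $\sum_i\|\xi_i\|^{\alpha_i}$, so it costs nothing in the lower bound and is paid for entirely by the $\|\xi\|^{\varepsilon/2}$ in the upper bound; the whole burden then shifts to the uniformity of $R(t(r))/r\to1$ over the unit sphere, which the paper delegates to the proof of \cite[Theorem 4.2]{MX}. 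To complete your argument you would need to prove that your $c^{n}$ is comparable to this explicit $t(r)$ (it is, since both normalize $\xi$ into a fixed annulus) and then exploit the $(\log r)^{\alpha_i(j-1)}\geq1$ direction of the correction, rather than the worst-case $n^{-M}$ direction your bookkeeping produces; a maximum over $i$ of crude singular-value bounds cannot recover this.
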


\begin{proof}
We will need the following refinement of the spectral decomposition of the exponent $E=E_{1}\oplus\ldots\oplus E_{p}$ with respect to $\rd=V_{1}\oplus\ldots\oplus V_{p}$ laid out in Section \ref{sec1}. Apply the Jordan decomposition to get further direct sums $V_i=U_{i1}\oplus\ldots\oplus U_{iq(i)}$ of $E$-invariant subspaces such that in an appropriate basis $E_i=E_{i1}\oplus\ldots\oplus E_{iq(i)}$ is block-diagonal and every $x\in U_{ij}\setminus\{0\}$ is of order $j$, i.e.\ if we write $E_{ij}=S_{ij}+N_{ij}$, where $S_{ij}$ is semisimple and $N_{ij}$ is nilpotent, then $N_{ij}^{j-1}x\not=0$ and $N_{ij}^{j}x=0$.  This $S+N$ decomposition is unique, e.g., see Hoffman and Kunze \cite{HK61}.  If we write $x=\sum_{i=1}^p\sum_{j=1}^{q(i)}x_{ij}$ with respect to these direct sum decompositions, so that $x_{ij}\in U_{ij}$, by the proof of Theorem 2.2.4 in \cite{MS} we have in an associated Euclidean norm
$$\|t^{-E^\ast}x\|^2=\sum_{i=1}^p\sum_{j=1}^{q(i)}\frac{t^{-2a_i}(\log t)^{2(j-1)}}{((j-1)!)^2}\,\|x_{ij}\|^2+o_{ij}(t,x),$$
where $E^\ast$ denotes the adjoint of the exponent $E$ and $o_{ij}(t,x)$ is a linear combination of terms of the form $t^{-2a_i}(\log t)^m$ with $m<2(j-1)$. Then for fixed $x\not=0$ the function $t\mapsto R(t)=\|t^{E^\ast}x\|^{-1}$ is regularly varying with index $a=\min\{a_i:\,x_i\not=0\}$. Now let $r\mapsto t(r)$ be an asymptotic inverse of $R(t)$, i.e.\ a regularly varying function with index $\alpha=a^{-1}$ such that $R(t(r))/r\to1$ as $r\to\infty$. An explicit choice of
\begin{equation}\label{asyinv}
t(r)=\sum_{i=1}^p\sum_{j=1}^{q(i)}\left(\frac{\alpha_i^{j-1}}{(j-1)!}\right)^{\alpha_i}r^{\alpha_i}(\log r)^{\alpha_i(j-1)}\|x_{ij}\|^{\alpha_i}
\end{equation}
shows that the convergence $R(t(r))/r\to1$ holds uniformly on compact subsets of $\{x\not=0\}$. For a more detailed derivation of \eqref{asyinv} we refer to the proof of Theorem 4.2 in \cite{MX}.

Write $t>0$ as $t=c^{k(t)}m(t)$ with $k(t)\in\ganz$ and $m(t)\in[1,c)$. By \eqref{soss} we have that $X(t)$ and $c^{k(t)E}X(m(t))$ are identically distributed and hence
\begin{equation}\label{psioss}
t\,\psi(\xi)=m(t)\,\psi(c^{k(t)E^\ast}\xi)\quad\text{ for all }t>0,\,\xi\in\rd.
\end{equation}
Let $F(\xi)=\Re(\psi(\xi))$, then by \eqref{psioss} we get
\begin{equation}\label{Repsioss}
t\,F(\xi)=m(t)\,F(c^{k(t)E^\ast}\xi)\quad\text{ for all }t>0,\,\xi\in\rd.
\end{equation}
Since $X$ is full, $F(\xi)$ is bounded away from zero and infinity on compact subsets of $\{\xi\not=0\}$ by Corollary 7.1.12 in \cite{MS}. Given $x\not=0$ and $r>0$ define $\theta_{r,x}=t(r)^{-E^\ast}rx$ using \eqref{asyinv}. Then
$\|\theta_{r,x}\|=r\|t(r)^{-E^\ast}x\|=r/R(t(r))\to1$ as $r\to\infty$ uniformly on compact subsets of $\{x\not=0\}$. Hence, given $\eta\in(0,1)$ there exists $r_0>0$ such that
\begin{equation}\label{thetab}
1-\eta<\|\theta_{r,x}\|<1+\eta\quad\text{ for all }r\geq r_0,\,x\in S_d.
\end{equation}
For $\xi\not=0$ let $r=\|\xi\|>0$ and $x=\xi/r\in S_d$, then by \eqref{Repsioss} we have
\begin{equation}\label{Foss}\begin{split}
F(\xi) & =F(rx)=F(t(r)^{E^\ast}\theta_{r,x})=F(c^{k(t(r))E^\ast}m(t(r))^{E^\ast}\theta_{r,x})\\
& = m(t(r))^{-1}t(r)F(m(t(r))^{E^\ast}\theta_{r,x})=c^{k(t(r))}F(m(t(r))^{E^\ast}\theta_{r,x})
\end{split}\end{equation}
and, since $m(t(r))\in[1,c)$ together with \eqref{thetab} we get that $F(m(t(r))^{E^\ast}\theta_{r,x})$ is uniformly bounded away from zero and infinity for all $r\geq r_0$ and $x\in S_d$.

Now let $\varepsilon>0$ be given and choose a constant $\tau\geq\max\{r_0,e\}$ such that for all $r\geq\tau$ we have $1\leq(\log r)^{\alpha_i(q(i)-1)}\leq r^{\varepsilon/2}$ for all $1\leq i\leq p$. Then it follows from \eqref{Foss} and \eqref{asyinv} that for all $r\geq\tau$ we have
\begin{equation}\label{Fossu}\begin{split}
F(\xi) & =c^{k(t(r))}F(m(t(r))^{E^\ast}\theta_{r,x})\leq K_1'c^{k(t(r))}m(t(r))\\
& =K_1't(r)=\tilde K_1\sum_{i=1}^p\sum_{j=1}^{q(i)}r^{\alpha_i}(\log r)^{\alpha_i(j-1)}\|x_{ij}\|^{\alpha_i}\\
& \leq\tilde K_1 r^{\varepsilon/2}\sum_{i=1}^pr^{\alpha_i}\sum_{j=1}^{q(i)}\|x_{ij}\|^{\alpha_i}\\
& \leq K_1\|\xi\|^{\varepsilon/2}\sum_{i=1}^p(r\|x_i\|)^{\alpha_i}=K_1\|\xi\|^{\varepsilon/2}\sum_{i=1}^p\|\xi_i\|^{\alpha_i},
\end{split}\end{equation}
where the constant $K_1$ does not depend on $\xi$ and the inequality in the last line follows from
$\sum_{j=1}^{q(i)}\|x_{ij}\|^{\alpha_i}\leq q(i)\|x_i\|^{\alpha_i}\leq d\|x_i\|^{\alpha_i}$. This proves part (a).

Similarly, for part (b) it follows from \eqref{Foss} and \eqref{asyinv} that for all $r\geq\tau$ we have
\begin{equation}\label{Fossl}\begin{split}
F(\xi) & =c^{k(t(r))}F(m(t(r))^{E^\ast}\theta_{r,x})\geq K_2' c^{k(t(r))}m(t(r))\\
& =K_2't(r)=\tilde K_2\sum_{i=1}^p\sum_{j=1}^{q(i)}r^{\alpha_i}(\log r)^{\alpha_i(j-1)}\|x_{ij}\|^{\alpha_i}\\
& \geq\tilde K_2\sum_{i=1}^pr^{\alpha_i}\sum_{j=1}^{q(i)}\|x_{ij}\|^{\alpha_i}\\
& \geq K_2\sum_{i=1}^p(r\|x_i\|)^{\alpha_i}=K_2\sum_{i=1}^p\|\xi_i\|^{\alpha_i},
\end{split}\end{equation}
where the constant $K_2$ does not depend on $\xi$ and the inequality in the last line follows from
$\|x_i\|^{\alpha_i}=\|\sum_{j=1}^{q(i)}x_{ij}\|^{\alpha_i}\leq C_1(\sum_{j=1}^{q(i)}\|x_{ij}\|^2)^{\alpha_i/2}\leq C_1\sum_{j=1}^{q(i)}\|x_{ij}\|^{\alpha_i}$.

Now we consider $G(\xi)=\Im(\psi(\xi))$ for which by \eqref{psioss} we have
\begin{equation*}
t\cdot G(\xi)=m(t)\cdot G(c^{k(t)E^\ast}\xi)\quad\text{ for all }t>0,\,\xi\in\rd
\end{equation*}
and $G$ is bounded on compact subsets of $\rd\setminus\{0\}$ by continuity of $\psi$. Hence as above we get for all $\|\xi\|=r\geq\tau$
\begin{equation}\label{Gossu}
|G(\xi)|=c^{k(t(r))}|G(m(t(r))^{E^\ast}\theta_{r,x})|\leq K_3't(r)\leq K_3\|\xi\|^{\varepsilon/2}\sum_{i=1}^p\|\xi_i\|^{\alpha_i},
\end{equation}
where the constant $K_3$ does not depend on $\xi$, proving part (c).
\end{proof}

\begin{cor}\label{proppsi}
Let $X$ be a strictly operator semistable L\'evy process in $\rd$ with L\'evy exponent $\psi$. Then for every $\varepsilon>0$ there exists $\tau>1$ such that for some $K=K(\varepsilon,\tau)$ we have
\begin{equation}\label{psibou}
\frac{K^{-1}\|\xi\|^{-\varepsilon}}{\sum_{i=1}^p\|\xi_i\|^{\alpha_i}}
\leq\Re\left(\frac1{1+\psi(\xi)}\right)\leq\frac{K}
{\sum_{i=1}^p\|\xi_i\|^{\alpha_i}}\quad\text{ for all }\|\xi\|>\tau.
\end{equation}
\end{cor}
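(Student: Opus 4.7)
The plan is to apply Theorem~\ref{main} after rewriting
$$\Re\!\left(\frac{1}{1+\psi(\xi)}\right)=\frac{1+F(\xi)}{(1+F(\xi))^2+G(\xi)^2},$$
where $F=\Re(\psi)$ and $G=\Im(\psi)$. Set $S(\xi)=\sum_{i=1}^p\|\xi_i\|^{\alpha_i}$ and fix $\varepsilon>0$. By Theorem~\ref{main} there is $\tau>1$ and constants $K_1,K_2,K_3$ such that, for $\|\xi\|>\tau$,
\begin{equation*}
K_2 S(\xi)\leq F(\xi)\leq K_1\|\xi\|^{\varepsilon/2}S(\xi),\qquad |G(\xi)|\leq K_3\|\xi\|^{\varepsilon/2}S(\xi).
\end{equation*}
In particular $1+\psi(\xi)\neq 0$ for such $\xi$ since $1+F(\xi)>0$, so the quantity on the left is well-defined.

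For the upper bound I would simply drop the $G^2$ term in the denominator and use the monotonicity $(1+F)/((1+F)^2+G^2)\leq 1/(1+F)\leq 1/F$, yielding $\Re((1+\psi)^{-1})\leq 1/(K_2 S(\xi))$. This requires no extra $\|\xi\|^{\varepsilon}$ factor because the lower bound on $F$ in part (a) of Theorem~\ref{main} is already sharp.

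For the lower bound I would first enlarge $\tau$ (if necessary) so that $F(\xi)\geq 1$ when $\|\xi\|>\tau$; this is possible because orthogonality of the $V_i$ gives $\|\xi_j\|\geq\|\xi\|/\sqrt{p}$ for some $j$, hence $S(\xi)\geq c\|\xi\|^{\alpha_p}\to\infty$. Then $1+F\leq 2F$, so $(1+F)^2\leq 4F^2\leq 4K_1^2\|\xi\|^{\varepsilon}S(\xi)^2$, and combined with $G^2\leq K_3^2\|\xi\|^{\varepsilon}S(\xi)^2$ one obtains
$$(1+F(\xi))^2+G(\xi)^2\leq(4K_1^2+K_3^2)\|\xi\|^{\varepsilon}S(\xi)^2.$$
Using $1+F(\xi)\geq F(\xi)\geq K_2 S(\xi)$ in the numerator finally gives
$$\Re\!\left(\frac{1}{1+\psi(\xi)}\right)\geq\frac{K_2 S(\xi)}{(4K_1^2+K_3^2)\|\xi\|^{\varepsilon}S(\xi)^2}=\frac{K_2}{4K_1^2+K_3^2}\cdot\frac{\|\xi\|^{-\varepsilon}}{S(\xi)}.$$
Taking $K$ to be the maximum of $1/K_2$ and $(4K_1^2+K_3^2)/K_2$ completes the argument.

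The proof is essentially algebraic manipulation once Theorem~\ref{main} is in hand; no step is genuinely hard. The one place where a little care is needed is the choice of $\tau$: it must be large enough both to apply Theorem~\ref{main} and to guarantee $F(\xi)\geq 1$, so that the comparison $1+F\asymp F$ can be used in the lower bound. Since both conditions amount to $\|\xi\|$ being sufficiently large, and $S(\xi)\to\infty$ with $\|\xi\|$, these can be arranged simultaneously without difficulty.
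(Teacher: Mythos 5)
Your proposal is correct and follows essentially the same route as the paper: the same algebraic identity for $\Re((1+\psi(\xi))^{-1})$ in terms of $F=\Re(\psi)$ and $G=\Im(\psi)$, the same upper bound via dropping $G^2$ and using $1/(1+F)\le 1/F\le (K_2\sum_i\|\xi_i\|^{\alpha_i})^{-1}$, and the same lower bound obtained by bounding the denominator with the upper estimates of Theorem~\ref{main}, which costs the factor $\|\xi\|^{-\varepsilon}$. Your extra care in enlarging $\tau$ so that $F\ge 1$ (to absorb the additive $1$ in the denominator) just makes explicit a step the paper folds silently into its constant $K_{12}$.
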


\begin{proof}
Using the obvious identity
$$\Re\left(\frac1{1+\psi(\xi)}\right)=\frac{1+\Re(\psi(\xi))}{(1+\Re(\psi(\xi)))^2+(\Im(\psi(\xi))^2}=\frac{1+F(\xi)}{(1+F(\xi))^2+(G(\xi))^2},$$
by Theorem \ref{main} we get for all $\xi\in\rd$ with $\|\xi\|\geq\tau$
$$\Re\left(\frac1{1+\psi(\xi)}\right)\leq\frac1{1+F(\xi)}\leq\frac1{F(\xi)}\leq\frac{K_2^{-1}}{\sum_{i=1}^p\|\xi_i\|^{\alpha_i}}$$
and
\begin{align*}
\Re\left(\frac1{1+\psi(\xi)}\right) & \geq\frac{F(\xi)}{(1+F(\xi)))^2+(G(\xi))^2}\\
& \geq\frac{K_2\sum_{i=1}^p\|\xi_i\|^{\alpha_i}}{\left(1+K_1\|\xi\|^{\varepsilon/2}\sum_{i=1}^p\|\xi_i\|^{\alpha_i}\right)^2+\left(K_3\|\xi\|^{\varepsilon/2}\sum_{i=1}^p\|\xi_i\|^{\alpha_i}\right)^2}\\
& \geq K_{12}\,\frac{\sum_{i=1}^p\|\xi_i\|^{\alpha_i}}{\left(\|\xi\|^{\varepsilon/2}\sum_{i=1}^p\|\xi_i\|^{\alpha_i}\right)^2 }=\frac{K_{12}\|\xi\|^{-\varepsilon}}{\sum_{i=1}^p\|\xi_i\|^{\alpha_i}},
\end{align*}
concluding the proof.
\end{proof}

\section{Applications to fractal path properties}

\subsection{Range and Graph}

We will now apply the results of Section 2 to derive fractal properties of the range $X([0,1])=\{X(t):\,t\in [0,1]\}$ 
and the graph $G_X([0,1])=\{(t,X(t)):\,t\in [0,1]\}$ of a strictly operator semistable L\'evy process $X$ in terms 
of their Hausdorff and packing dimensions.
We refer to \cite{Fal90} for a systemic account on fractal dimensions and their properties.
With the help of the spectral decomposition of the exponent $E$ the Hausdorff dimension of the range of a 
strictly operator semistable L\'evy process in $\rd$ with $d\geq2$ has been calculated in Theorem 3.1 of \cite{KW1} as
\begin{equation}\label{dimr2}
\dim_{\rm H}X(B)=\begin{cases}\alpha_1\dim_{\rm H}B & \text{ if }\alpha_1\dim_{\rm H}B\leq d_1\\ 1+
\alpha_2(\dim_{\rm H}B-\alpha_1^{-1}) & \text{ else}\end{cases}
\end{equation}
almost surely, where $B\in\mathcal B(\rr_+)$ is an arbitrary Borel set. In case $d=1$ by Theorem 3.3 in \cite{KW1} for a strictly $\alpha$-semistable L\'evy process we have
\begin{equation}\label{dimr1}
\dim_{\rm H}X(B)=\min\{\alpha\dim_{\rm H}B,1\}
\end{equation}
almost surely. In the special case of a strictly operator stable L\'evy process the formula \eqref{dimr2} was established by Meerschaert and Xiao \cite{MX} generalizing an earlier partial result for $B=[0,1]$ in \cite{BMS}. Moreover, the Hausdorff dimension of the graph of a strictly operator semistable L\'evy process in $\rd$ with $d\geq2$ was recently calculated in Theorem 3.1 of \cite{Wed} as
\begin{equation}\label{dimg2}
\dim_{\rm H}G_X(B)=\begin{cases}\dim_{\rm H}B\cdot\max\{\alpha_1,1\} & \text{ if }\alpha_1\dim_{\rm H}B\leq d_1\\ 1+\max\{\alpha_2,1\}(\dim_{\rm H}B-\alpha_1^{-1}) & \text{ else}\end{cases}
\end{equation}
almost surely, and in case $d=1$ by Theorem 3.2 in \cite{Wed} for a strictly $\alpha$-semistable L\'evy process we have
\begin{equation}\label{dimg1}
\dim_{\rm H}G_X(B)=\begin{cases}\dim_{\rm H}B\cdot\max\{\alpha,1\} & \text{ if }\alpha\dim_{\rm H}B\leq 1\\ 1+\dim_{\rm H}B-\alpha^{-1} & \text{ else}\end{cases}
\end{equation}
almost surely. The derivation of \eqref{dimr2}--\eqref{dimg1} in \cite{KW1,Wed} uses the standard method of showing that 
almost surely the right-hand side in \eqref{dimr2}--\eqref{dimg1} serves as an upper as well as a lower bound for the 
Hausdorff dimension on the corresponding left-hand side, following classical results for the range of one-dimensional 
stable L\'evy processes in Blumenthal and Getoor \cite{BG,BG1,BG2} and Hendricks \cite{Hen}, and L\'evy processes 
with independent stable components in Pruitt and Taylor \cite{Tay,Pru,PT}. The lower bound is shown by an application 
of Frostman's capacity theorem to prove that certain expected energy integrals are finite. The upper bound is shown 
by using the covering lemma of Pruitt and Taylor \cite[Lemma 6.1]{PT} which needs sharp lower bounds for the expected 
sojourn time in small balls. For the latter in \cite{KW1} uniform density bounds were derived in the semistable situation. 
For an overview we refer to the survey article \cite{Xiao}.

An alternative analytic approach for $B=[0,1]$ uses an index formula proved in Corollary 1.8 of \cite{KXZ}, 
valid for arbitrary L\'evy processes $X$ in $\rd$, which states that almost surely
\begin{equation}\label{Hdimcrit}
\dim_{\rm H}X([0,1])=\sup\left\{a<d:\,\int_{\{\|\xi\|\geq1\}}\Re\left(\frac1{1+\psi(\xi)}\right)\,
\frac{d\xi}{\|\xi\|^{d-a}}<\infty\right\}.
\end{equation}
Similarly, Khoshnevisan and Xiao \cite{KX08} established
the following formula for the packing dimension of $X([0,1])$ in terms of the L\'evy exponent 
$\psi(\xi)$:
\begin{equation}\label{Eq:KX08}
 \dimp X([0\,,1]) = \sup\left\{\eta\ge 0:\ \liminf_{r\to 0^+}
        \frac{W(r)}{r^\eta}  =0\right\}
        = \limsup_{r\to 0^+} \frac{\log W(r)} {\log r},
\end{equation}
almost surely, where $\sup\varnothing :=0$ and the function $W$ is defined by
\begin{equation}\label{def:W}
    W(r)= \int_{\rd}  \Re\left( \frac{1}{1+\psi(\frac x r)}
    \right)\frac1{ \prod_{j=1}^d
    (1+ x^2_j)}\, dx.
\end{equation}
In \cite[Eq. (1.4)]{KX08} they also provided a formula for
$\dim_{\rm H}X([0,1])$ in terms of $W$.
Notice that, when applied to the L\'evy process $\{(t, X(t)): t \ge 0\}$,
(\ref{Hdimcrit}) and (\ref{Eq:KX08}) also provide analytic ways for
computing the Hausdorff and packing dimensions of the graph of $X$.

Meerschaert and Xiao \cite[Proposition 4.1]{MX} used \eqref{Hdimcrit} to give an alternative proof for \eqref{dimr2} in case $X$ is a full, strictly operator stable L\'evy process and $B=[0,1]$ using bounds for $\Re((1+\psi(\xi))^{-1})$ as in \eqref{psibou}. See also in Proposition 7.7 of \cite{KX}. Khoshnevisan and Xiao \cite[Theorem 3.1]{KX08} showed that, under condition \eqref{psibou}, the packing dimension of $X([0,1])$ is also given by the right-hand side of \eqref{dimr2} with $B=[0,1]$.  Using Corollary \ref{proppsi}, we immediately obtain the following special case of \eqref{dimg2}.

\begin{theorem}\label{MMMadded}
Let $X$ be a strictly operator semistable L\'evy process in $\rd$ with $d\geq2$. Then \begin{equation}\label{dimrange}
\dim_{\rm H}X([0,1])=\dimp X([0\,,1]) = \begin{cases}\alpha_1 & \text{ if }\alpha_1\leq d_1\\ 1+\alpha_2(1-\alpha_1^{-1}) & \text{ else}\end{cases}
\end{equation}
almost surely, in accordance with \eqref{dimr2}. 
\end{theorem}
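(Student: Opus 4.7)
The plan is to combine Corollary \ref{proppsi} with the index formula \eqref{Hdimcrit} for the Hausdorff dimension and with the result of Khoshnevisan and Xiao \cite[Theorem 3.1]{KX08} for the packing dimension. Under the two-sided bound \eqref{psibou}, that theorem directly identifies $\dimp X([0,1])$ with the right-hand side of \eqref{dimrange}; since Corollary \ref{proppsi} supplies \eqref{psibou}, the packing dimension part of the claim is immediate.

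For the Hausdorff dimension, substituting the bounds of Corollary \ref{proppsi} into \eqref{Hdimcrit} reduces the task to computing
\begin{equation*}
a^* := \sup\left\{a < d : J(a) := \int_{\|\xi\|>\tau}\frac{d\xi}{\|\xi\|^{d-a}\sum_{i=1}^p\|\xi_i\|^{\alpha_i}}<\infty\right\}.
\end{equation*}
The upper bound in \eqref{psibou} gives $\dim_{\rm H}X([0,1])\geq a$ whenever $J(a)<\infty$, while the lower bound (carrying the extra factor $\|\xi\|^{-\varepsilon}$) gives $\dim_{\rm H}X([0,1])\leq a$ whenever $J(a-\varepsilon)=\infty$; sending $\varepsilon\to 0$ yields $\dim_{\rm H}X([0,1])=a^*$.

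To evaluate $a^*$, I would use the orthogonal decomposition $\rd=V_1\oplus\cdots\oplus V_p$ and switch to spherical coordinates $\xi_i=r_i\omega_i$ in each $V_i\cong\rr^{d_i}$, so that $d\xi_i=r_i^{d_i-1}dr_i\,d\sigma_i(\omega_i)$ and $\|\xi\|^2=\sum_j r_j^2$. After integrating out the angular variables (which yield only harmless constants), I would partition the positive orthant of $(r_1,\ldots,r_p)$ by which $r_k$ is the largest. On the $V_1$-dominant piece one has $\sum_i r_i^{\alpha_i}\asymp r_1^{\alpha_1}$ and $\|\xi\|\asymp r_1$, and integrating out the subordinate $r_j$'s collapses the integral to $\int r_1^{a-\alpha_1-1}\,dr_1$, which converges iff $a<\alpha_1$. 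This is the binding constraint in the case $\alpha_1\leq d_1$ and gives $a^*=\alpha_1$. In the opposite case $\alpha_1>d_1$ --- which, since $\alpha_1\leq 2$, forces $d_1=1$ --- the binding constraint shifts to the $V_2$-dominant region, which must be subdivided further according to whether $r_2^{\alpha_2}$ or $r_1^{\alpha_1}$ dominates in the denominator. Explicit double-integral computations on those slices produce the threshold $a<1+\alpha_2(1-\alpha_1^{-1})$, which is strictly smaller than $\alpha_1$ and hence becomes the binding constraint.

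The main technical obstacle is the bookkeeping of these transition regions: different pieces of the orthant give different constraints on $a$, and one must verify that the pieces where three or more $r_i$'s are simultaneously comparable do not produce stricter constraints. Fortunately, Corollary \ref{proppsi} has exactly the same structural form as the operator stable bound used in \cite[Proposition 4.1]{MX}, apart from the factor $\|\xi\|^{-\varepsilon}$ which vanishes in the limit, so the calculation carried out there transports to the operator semistable setting essentially verbatim.
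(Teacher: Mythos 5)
Your proposal is correct and follows exactly the route the paper takes: its proof of Theorem \ref{MMMadded} consists precisely of invoking Corollary \ref{proppsi} and then repeating the arguments of \cite[Proposition 4.1]{MX} for the Hausdorff dimension via \eqref{Hdimcrit} and of \cite[Theorem 3.1]{KX08} for the packing dimension. Your additional detail (the $\varepsilon\to 0$ squeeze to absorb the $\|\xi\|^{-\varepsilon}$ factor, and the polar-coordinate partition by dominant block to evaluate $J(a)$) is a faithful account of what that cited computation actually does.
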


\begin{proof}
Use Corollary \ref{proppsi} and follow the arguments for \cite[Proposition 4.1]{MX} and \cite[Theorem 3.1]{KX08}.
\end{proof}

We can also obtain a special case of \eqref{dimr1} as follows.

\begin{cor}
Let $X$ be a strictly $\alpha$-semistable L\'evy process in $\rr$. Then 
$$\dim_{\rm H}X([0,1])= \dimp X([0\,,1]) =\min\{\alpha,1\}$$
almost surely.
\end{cor}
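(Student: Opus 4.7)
The plan is to observe that the corollary is the $d=1$ specialization of Theorem \ref{MMMadded}, so the same template of proof should work, with Corollary \ref{proppsi} in place of the multi-dimensional tail bound. When $d=1$ the spectral decomposition becomes trivial: we have $p=1$, $d_1=1$, and the exponent $E$ reduces to the scalar $1/\alpha$, giving $\alpha_1=\alpha$. Corollary \ref{proppsi} then specializes to
\[
\frac{K^{-1}|\xi|^{-\varepsilon}}{|\xi|^{\alpha}}
\;\leq\;\Re\!\left(\frac{1}{1+\psi(\xi)}\right)
\;\leq\;\frac{K}{|\xi|^{\alpha}} \qquad \text{for all } |\xi|>\tau.
\]

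I would feed this into the index formula \eqref{Hdimcrit} for the Hausdorff dimension, exactly as done for \cite[Proposition 4.1]{MX}. Writing the integral in polar form (which in one dimension is just splitting into the positive and negative half-lines), the upper bound on $\Re((1+\psi)^{-1})$ gives the tail integral $\int_{\tau}^{\infty}|\xi|^{a-1-\alpha}\,d|\xi|$ up to constants, which is finite exactly when $a<\alpha$. Combined with the constraint $a<d=1$ in \eqref{Hdimcrit}, this shows every $a<\min\{\alpha,1\}$ lies in the supremum set, so $\dim_{\rm H}X([0,1])\geq\min\{\alpha,1\}$. For the reverse inequality, the lower bound on $\Re((1+\psi)^{-1})$ forces the integral to diverge as soon as $a>\alpha+\varepsilon$; letting $\varepsilon\downarrow 0$ and using $a<d=1$ yields $\dim_{\rm H}X([0,1])\leq\min\{\alpha,1\}$.

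For the packing dimension, I would use \eqref{Eq:KX08} together with the same two-sided bounds, following the scheme of \cite[Theorem 3.1]{KX08}: the function $W(r)$ in \eqref{def:W} is estimated from above and below by $r^{\min\{\alpha,1\}}$ up to an $r^{-\varepsilon}$ factor, so that $\limsup_{r\to 0^+}\log W(r)/\log r=\min\{\alpha,1\}$ after sending $\varepsilon\downarrow 0$. In both cases the only step that is not a direct transcription of the $d\geq 2$ argument is the handling of the regime $\alpha\geq 1$: there is no $\alpha_2$ to invoke, but this is harmless because the trivial upper bound $\dim_{\rm H}X([0,1])\leq d=1$ already closes the gap. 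I do not expect any serious obstacle; the main point is simply to verify that the one-dimensional integrals convergence condition $a<\alpha$ together with the ambient constraint $a<1$ reproduces $\min\{\alpha,1\}$, and analogously for the packing-dimension calculation.
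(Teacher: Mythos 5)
Your proposal is correct and follows essentially the same route as the paper: both feed the one-dimensional specialization of the tail bounds into the index formulae \eqref{Hdimcrit} and \eqref{Eq:KX08}, and both dispose of the case $\alpha\geq1$ for the packing dimension via the trivial bound $\dimp X([0,1])\leq 1$, so that only $W(r)\gtrsim r^{\alpha}$ for $\alpha<1$ is really needed. The one cosmetic difference is that the paper first sharpens Corollary \ref{proppsi} to the $\varepsilon$-free two-sided bound $K^{-1}|\xi|^{-\alpha}\leq\Re\bigl((1+\psi(\xi))^{-1}\bigr)\leq K|\xi|^{-\alpha}$ for $|\xi|\geq1$ (possible in $d=1$ because the asymptotic inverse $t(r)=(r|x|)^{\alpha}$ is exact, so $R(t(r))=r$), turning the convergence criterion into a clean equivalence $a<\alpha$, whereas you retain the $|\xi|^{-\varepsilon}$ loss and send $\varepsilon\downarrow0$ at the end; both are valid.
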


\begin{proof}
In case $d=1$ the conclusion \eqref{psibou} of Corollary \ref{proppsi} reads as
\begin{equation}\label{psibou1}
K^{-1}|\xi|^{-\varepsilon-\alpha}\leq\Re\left(\frac1{1+\psi(\xi)}\right)\leq K|\xi|^{-\alpha}\quad\text{ for all }|\xi|>\tau.
\end{equation}
Note that for $d=1$ we can strengthen \eqref{psibou1} to
\begin{equation}\label{psibou1s}
K^{-1}|\xi|^{-\alpha}\leq\Re\left(\frac1{1+\psi(\xi)}\right)\leq K |\xi|^{-\alpha}\quad\text{ for all }|\xi|\geq1,
\end{equation}
since in this case $R(t)=t^{1/\alpha}|x|^{-1}$ and the asymptotic inverse can be chosen as $t(r)=(r|x|)^\alpha$ such that $R(t(r))=r$ for all $r>0$. Following the line of arguments given in the proof of Theorem \ref{main}, it is easy to arrive at \eqref{psibou1s} instead of \eqref{psibou1}. Using \eqref{psibou1s} it is obvious that
$$\int_{|\xi|\geq1}\Re\left(\frac1{1+\psi(\xi)}\right)\,\frac{d\xi}{|\xi|^{1-a}}<\infty\quad\iff\quad a<\alpha.$$
Hence by \eqref{Hdimcrit} we immediately get
$$\dim_{\rm H}X([0,1])=\sup\left\{a<1:\,\int_{\{|\xi|\geq1\}}
\Re\left(\frac1{1+\psi(\xi)}\right)\,\frac{d\xi}{|\xi|^{1-a}}<\infty\right\}=\min\{\alpha,1\}$$
almost surely. Since $\dim_{\rm H}X([0,1]) \le \dimp X([0\,,1]) \le 1$, we see that, in order to prove $\dimp X([0\,,1]) =\min\{\alpha,1\}$ a.s., it is sufficient to consider the case $\alpha < 1$ and verify $\dimp X([0\,,1]) \le \alpha$ a.s. It follows from (\ref{def:W}) and (\ref{psibou1s}) that for $r \in (0, 1)$,
\begin{equation}\label{Eq:case1a}
W(r) \ge K^{-1}\,  r^{\alpha} \int_{\rr}
            \frac{1}{|x|^{\alpha}
            ( 1+ x^2)} = K\, r^{\alpha},
\end{equation}
which implies that $\lim_{r \to 0} r^{-\eta} {W}(r) = \infty$ for all
$\eta > \alpha$. By (\ref{Eq:KX08}), we obtain
$\dimp X([0\,,1])$ $\le \alpha$ a.s. This concludes the proof.
\end{proof}

We now turn to the graph process $\{(t,X(t))\}_{t\geq0}$ which is a L\'evy process in $\rr^{d+1}$ such that \eqref{oss} holds with block diagonal exponent $1\oplus E$. Since the L\'evy measure of the graph process is concentrated on a $d$-dimensional subspace of $\rr^{d+1}$, it is not full \cite[Proposition 3.1.20]{MS}, and hence it is not operator semistable.  However, neither \eqref{Hdimcrit} nor \eqref{Eq:KX08} assume fullness of the L\'evy process.  

Write $\tilde\xi=(\xi_0,\xi)\in\rr^{d+1}$ with $\xi=(\xi_1,\ldots,\xi_d)\in\rd$ and observe that the L\'evy exponent $\tilde\psi$ of the graph process is given by $\tilde\psi(\tilde\xi)=\psi(\xi)-i\xi_0$, which leads to
\begin{equation}\label{intnew}
\Re\left(\frac1{1+\tilde\psi(\tilde\xi)}\right)=\frac{1+F(\xi)}{(1+F(\xi))^2+(G(\xi)-\xi_0)^2}=:H(\xi_0,\xi),
\end{equation}
where $F=\Re\psi$ and $G=\Im\psi$ are as in the proof of Theorem \ref{main}.  Next we prove a special case of \eqref{dimg2}.  The proof is elementary, using only the sharp bounds of Theorem \ref{main} along with the index formulae \eqref{Hdimcrit} and \eqref{Eq:KX08}.

\begin{theorem}\label{dimgraph}
Let $X$ be a strictly operator semistable L\'evy process in $\rd$ with $d\geq2$. Then 
$$\dim_{\rm H}G_X([0,1])= \dim_{\rm P}G_X([0,1])=\begin{cases}\max\{\alpha_1,1\} & \text{ if }\alpha_1\leq d_1\\ 1+\max\{\alpha_2,1\}(1-\alpha_1^{-1}) & \text{ else}\end{cases}$$
almost surely.
\end{theorem}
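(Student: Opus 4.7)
The plan is to treat the graph as the range of the auxiliary $(d+1)$-dimensional L\'evy process $\tilde X(t)=(t,X(t))$, whose L\'evy exponent $\tilde\psi(\tilde\xi)=\psi(\xi)-i\xi_0$ satisfies \eqref{intnew}, and to apply the index formulas \eqref{Hdimcrit} and \eqref{Eq:KX08} to $\tilde X$. The Hausdorff-dimension problem then reduces to locating the critical exponent at which
$$J(a)\;=\;\int_{\|\tilde\xi\|\geq 1}H(\xi_0,\xi)\,\frac{d\tilde\xi}{\|\tilde\xi\|^{d+1-a}}$$
transitions from finite to infinite, and the packing-dimension problem to the analogous analysis of $W(r)$ from \eqref{def:W}.

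The key computation is to integrate out $\xi_0$ at fixed $\xi$ with $\|\xi\|$ large. Since $H(\cdot,\xi)$ is a Cauchy-type density on $\rr$ with center $G(\xi)$ and scale $1+F(\xi)$, the two elementary estimates $H(\xi_0,\xi)\leq(1+F(\xi))^{-1}$ and $\int_{\rr}H(\xi_0,\xi)\,d\xi_0=\pi$ yield, upon taking their minimum,
$$\int_\rr H(\xi_0,\xi)\,\frac{d\xi_0}{(\|\xi\|^2+\xi_0^2)^{(d+1-a)/2}}\;\leq\;\frac{C}{\|\xi\|^{d-a}\,\max\bigl(1+F(\xi),\,\|\xi\|\bigr)}$$
for $a<d$. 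For the matching lower bound, I would restrict the integration to the sub-region $|\xi_0|\leq(1+F(\xi))+|G(\xi)|$, on which the Theorem \ref{main} control $|G(\xi)|\leq K_3\|\xi\|^{\varepsilon/2}F(\xi)/K_2$ forces $|\xi_0-G(\xi)|\leq C(1+F(\xi))\|\xi\|^{\varepsilon/2}$, so $H(\xi_0,\xi)\gtrsim\|\xi\|^{-\varepsilon}/(1+F(\xi))$; integrating against the polynomial weight and splitting on whether the length $(1+F(\xi))+|G(\xi)|$ exceeds $\|\xi\|$ recovers the matching lower estimate up to $\|\xi\|^\varepsilon$ factors.

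Substituting this $\asymp$ into $J(a)$ and invoking the elementary identity $\max(\|\xi_i\|^{\alpha_i},\|\xi_i\|)=\|\xi_i\|^{\max(\alpha_i,1)}$ for $\|\xi_i\|\geq 1$ together with Theorem \ref{main} gives
$$J(a)\;\asymp\;\int_{\|\xi\|\geq 1}\frac{d\xi}{\|\xi\|^{d-a}\,\sum_{i=1}^p\|\xi_i\|^{\max(\alpha_i,1)}},$$
again up to $\|\xi\|^\varepsilon$ slack. This is exactly the integral treated in Meerschaert--Xiao \cite[Proposition 4.1]{MX} for the range dimension, but with the original spectral exponents $\alpha_i$ replaced by the ``graph-corrected'' ones $\alpha_i^\ast=\max(\alpha_i,1)$. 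Their operator polar-coordinates argument $\xi=\tau^{E^\ast}\sigma$, with the case split on whether $\alpha_1^\ast\leq d_1$, then delivers the critical exponent asserted in the theorem; note that $\alpha_1>d_1$ forces $d_1=1\leq\alpha_1$ and hence $\alpha_1^\ast=\alpha_1$, so the two cases of the theorem align with the two cases of \cite[Proposition 4.1]{MX}. The packing-dimension statement follows along identical lines from the analogous treatment of $W(r)$ in \cite[Theorem 3.1]{KX08}.

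The hard part will be the sharp $\xi_0$-integration producing the $\max(F,\|\xi\|)$ factor: the Cauchy density in $\xi_0$ and the radial weight $(\|\xi\|^2+\xi_0^2)^{-(d+1-a)/2}$ interact case-by-case depending on whether the scale $1+F(\xi)$ and center $|G(\xi)|$ are dominated by or dominate $\|\xi\|$. Theorem \ref{main} controls $F$ and $G$ simultaneously only up to a $\|\xi\|^\varepsilon$ slack, but this is harmless because the suprema in \eqref{Hdimcrit} and \eqref{Eq:KX08} are insensitive to sub-polynomial perturbations and one sends $\varepsilon\to 0$ at the end.
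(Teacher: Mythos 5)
Your treatment of the Hausdorff dimension is a genuinely different route from the paper's, and it is viable. The paper never evaluates the index integral \eqref{Hdimcrit} for the full $(d+1)$-dimensional graph process: it gets the lower bound almost for free by projecting the graph onto the time axis, onto the range (Theorem \ref{MMMadded}), and onto the one-dimensional subgraph $\{(t,X^{(1)}(t))\}$ (Proposition \ref{dimgraph1}), and it gets the upper bound only through the packing-dimension functional $W(r)$. Your plan instead integrates out $\xi_0$ against the Cauchy-type kernel $H(\xi_0,\xi)$: the two elementary bounds $H\leq(1+F)^{-1}$ and $\int_{\rr}H(\xi_0,\xi)\,d\xi_0=\pi$ do produce the factor $\max(1+F(\xi),\|\xi\|)$ for $a<d$ (which suffices, since the claimed dimension never exceeds $2\leq d$), the restricted-region lower bound is correct, and the case bookkeeping $\alpha_1\leq d_1\iff\max\{\alpha_1,1\}\leq d_1$ checks out. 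This buys a self-contained argument not leaning on the separately proved range and one-dimensional graph results; the cost is that you must actually carry out the polar-coordinate evaluation of \cite[Proposition 4.1]{MX} for the modified exponents $\max\{\alpha_i,1\}$ (which may now coincide when several $\alpha_i<1$) and handle the regions where some $\|\xi_i\|<1$, where $\max(\|\xi_i\|^{\alpha_i},\|\xi_i\|)\neq\|\xi_i\|^{\max(\alpha_i,1)}$. The closest analogue in the paper is the Manstavi\v{c}ius-style decomposition in the proof of Proposition \ref{dimgraph1}, which performs exactly your $\xi_0$-versus-$\xi$ case split, but only for $d=1$ and only in the convergence direction.

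The genuine gap is the packing-dimension upper bound, which you dispose of in one sentence by appeal to \cite[Theorem 3.1]{KX08}. That theorem assumes the two-sided bound \eqref{psibou} for the process in question, and the graph process does not satisfy it: its exponent is $\tilde\psi(\tilde\xi)=\psi(\xi)-i\xi_0$, so $\Re\tilde\psi$ contains no term in $\xi_0$ at all and the lower bound in \eqref{psibou} fails in the time coordinate --- the whole point of \eqref{intnew} is that the denominator carries the uncontrolled term $(G(\xi)-\xi_0)^2$. One must therefore prove the lower bound $W(r)\geq K r^{\eta}$ for $\eta$ slightly above the claimed dimension by hand, and this is where the paper invests essentially all of its effort: the inner integral $I(r)$ is estimated after rescaling only some coordinates by $r$, with a case split on whether the time index $1$ is the largest of the $d+1$ tail indices ($i^{\ast}=0$ versus $i^{\ast}\geq1$) and a tuned auxiliary exponent $\beta<1$ for the lower limit $\tau r^{\beta}$ of the $\tilde\xi_0$-integration. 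Your Hausdorff computation gives $\dimp G_X([0,1])\geq\dim_{\rm H}G_X([0,1])$ for free, but without this $W(r)$ estimate the matching packing upper bound is not established. The idea behind your lower bound on $H$ (restricting to $|\xi_0-G(\xi)|\lesssim(1+F(\xi))\|\xi\|^{\varepsilon/2}$) is indeed the one the paper's $W(r)$ estimate implements, but the interaction of the $r$-scaling with the distinct exponents in the $d+1$ coordinates is delicate and must be written out; ``identical lines'' is not a proof here.
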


To clarify the proof of Theorem \ref{dimgraph}, it will be helpful to derive the corresponding statement for $d=1$ first.  This one dimensional result is a special case of \eqref{dimg1}.

\begin{prop}\label{dimgraph1}
Let $X$ be a strictly $\alpha$-semistable L\'evy process in $\rr$. Then 
\[
\begin{split}
\dim_{\rm H}G_X([0,1])&= \dim_{\rm P}G_X([0,1]) = \max\{1,2-\alpha^{-1}\} =\begin{cases}1 & \text{ if }\alpha\leq 1\\ 2-\alpha^{-1} & \text{ else}
\end{cases}
\end{split}
\]
almost surely.
\end{prop}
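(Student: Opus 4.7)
The plan is to apply both index formulae \eqref{Hdimcrit} and \eqref{Eq:KX08} to the graph process $\{(t,X(t))\}_{t\ge 0}$, a L\'evy process in $\rr^2$ whose exponent $\tilde\psi(\xi_0,\xi)=\psi(\xi)-i\xi_0$ satisfies $\Re((1+\tilde\psi(\tilde\xi))^{-1})=H(\xi_0,\xi)$ by \eqref{intnew}. The strengthened one-dimensional bounds derived in the proof of the preceding corollary give $K^{-1}|\xi|^\alpha\le F(\xi)\le K|\xi|^\alpha$ and $|G(\xi)|\le K|\xi|^\alpha$ for all $|\xi|\ge 1$.

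For the Hausdorff dimension, \eqref{Hdimcrit} in $\rr^2$ reduces the claim to the dichotomy that
\[
J(a):=\int_{\|\tilde\xi\|\ge 1} H(\xi_0,\xi)\,\|\tilde\xi\|^{a-2}\,d\xi_0\,d\xi
\]
is finite for $a<\max\{1,2-\alpha^{-1}\}$ and infinite above. My main tools will be the two pointwise bounds $H\le(1+F(\xi))^{-1}$ and $H\le (1+F(\xi))(\xi_0-G(\xi))^{-2}$, together with the averaging estimate $\int_{|\xi_0|\le R}H(\xi_0,\xi)\,d\xi_0\le\min\{\pi,\,2R/(1+F(\xi))\}$. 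I will split $\{\|\tilde\xi\|\ge 1\}$ by comparing $|\xi|$, $|\xi_0|$, and (when $\alpha>1$) $|\xi|^\alpha$. The isotropic slab $\{|\xi_0|\le|\xi|\}$ contributes $\lesssim\int_{|\xi|>\tau}\min(1,|\xi|^{1-\alpha})|\xi|^{a-2}\,d\xi$, finite for $a<1$ when $\alpha\le 1$ and for $a<\alpha$ when $\alpha>1$ (both weaker than the claimed threshold). The anisotropic sliver $\{|\xi|<|\xi_0|\lesssim|\xi|^\alpha\}$, nonempty only for $\alpha>1$, is the critical one: there $\|\tilde\xi\|\asymp|\xi_0|$ and $H\lesssim|\xi|^{-\alpha}$, and direct evaluation yields the exact condition $a<2-\alpha^{-1}$. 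The outer region $\{|\xi_0|\gtrsim|\xi|^\alpha\}$ is handled by $H\lesssim|\xi|^\alpha/\xi_0^2$ and yields no binding restriction. For divergence above the threshold, I will restrict to $\{|\xi_0-G(\xi)|\le 1+F(\xi)\}$, a $\xi_0$-interval of length $\asymp|\xi|^\alpha$ on which $H\gtrsim|\xi|^{-\alpha}$ and $\|\tilde\xi\|\lesssim|\xi|^{\max(1,\alpha)}$, yielding the matching lower bound $J(a)\gtrsim\int|\xi|^{\alpha(a-2)}\,d\xi$ (for $\alpha>1$) or $\int|\xi|^{a-2}\,d\xi$ (for $\alpha\le 1$).

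For the packing dimension, applying \eqref{Eq:KX08} in $\rr^2$ and substituting $x=ry$ in \eqref{def:W} gives
\[
W(r)=r^2\int_{\rr^2} H(y_0,y)\,\prod_{j=0}^{1}\frac{dy_j}{1+r^2 y_j^2}.
\]
The product weight is $\asymp 1$ on $\{\|\tilde y\|\lesssim 1/r\}$, and the same region analysis as above (with $R=1/r$) shows $W(r)\asymp r^{\max\{1,2-\alpha^{-1}\}}$ as $r\to 0^+$, possibly up to a logarithmic factor that does not affect the $\limsup$ in \eqref{Eq:KX08}. This identifies $\dim_{\rm P}G_X([0,1])$ with $\max\{1,2-\alpha^{-1}\}$ and matches the Hausdorff value.

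The main obstacle is the unknown position of the peak of $H$ in $\xi_0$ at $\xi_0=G(\xi)\in[-K|\xi|^\alpha,K|\xi|^\alpha]$. The averaging estimate is oblivious to this location and is sharp enough in the isotropic slab, but in the anisotropic sliver one must argue pointwise and verify that the sharp bound $H\asymp|\xi|^{-\alpha}$ is attained on a $\xi_0$-set of the correct length $\asymp|\xi|^\alpha$ in order to furnish \emph{both} the upper estimate for $a<2-\alpha^{-1}$ and the matching lower estimate for $a>2-\alpha^{-1}$. The transition $\alpha=1$ is a bookkeeping check where the two candidate dimensions coincide.
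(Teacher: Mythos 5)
Your proposal is correct and follows essentially the same route as the paper: the lower bound comes from \eqref{Hdimcrit} applied to the graph process with the same three-region decomposition of the $(\xi_0,\xi)$-plane (the paper's $A_1,A_2,A_3$, following Manstavi\v{c}ius), and the upper bound comes from the lower estimate $W(r)\gtrsim r^{\max\{1,2-\alpha^{-1}\}}$ fed into \eqref{Eq:KX08}. The only differences are minor: you use the exact $d=1$ bounds $F(\xi)\asymp|\xi|^{\alpha}$, $|G(\xi)|\lesssim|\xi|^{\alpha}$ in place of the $\varepsilon$-corrected ones (so your region boundary sits at $|\xi|^{\alpha}$ rather than $|\xi|^{\alpha+\varepsilon/2}$), and you add a divergence argument for $J(a)$ above the threshold to bound $\dim_{\rm H}$ directly, which is valid but redundant since the paper (and you) already get that upper bound from $\dim_{\rm H}\le\dim_{\rm P}$.
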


In the next two proofs, $K$ denotes an unspecified positive constant whose value may vary at each occurrence.

\begin{proof}
We will first establish lower bounds. In case $\alpha\leq1$ clearly $\dim_{\rm H}G_X([0\,,1])\geq1$ by projecting the graph $\{(t,X(t))\}_{t\geq0}$ onto the first (deterministic) component. In case $\alpha>1$ let $\gamma\in(0,2-\alpha^{-1})$ and then note that in view of \eqref{Hdimcrit} and \eqref{intnew} we need to show that
\begin{align*}
I_\gamma:= & \int_{\{|\xi_0|\geq2K''_3,|\xi|\geq\tau\}}\Re\left(\frac1{1+\tilde\psi(\tilde\xi)}\right)\,\frac{d\tilde\xi}{\|\tilde\xi\|^{2-\gamma}}\\
= & \left(\int_{A_1}+\int_{A_2}+\int_{A_3}\right)\frac{H(\xi_0,\xi)}{(\xi_0^2+\xi^2)^{1-\gamma/2}}\,d\tilde\xi<\infty,
\end{align*}
where we use a similar decomposition of the domain of integration as in the proof of Theorem 2.1 in Manstavi\v{c}ius \cite{Man}; cf.\ also Figure 1 in \cite{Man}. Namely we set
\begin{align*}
A_1 & =\left\{|\xi_0|\geq2K''_3\max\left\{1,\left(|\xi|/\tau\right)^q\right\}\right\},\\
A_2 & =\left\{|\xi|\geq\tau, |\xi_0|\leq2K''_3|\xi|/\tau\right\},\\
A_3 & =\left\{|\xi|\geq\tau, 2K''_3|\xi|/\tau<|\xi_0|<2K''_3\left(|\xi|/\tau\right)^q\right\},
\end{align*}
where $q=\alpha+\varepsilon/2>1$, $K''_3=K_3\tau^q$ and $\tau>1$ is chosen such that Theorem \ref{main} holds for $\varepsilon>0$ with the following constraints. Since we always have $2-\alpha^{-1}\leq\alpha$, we know that $\gamma<\alpha$ and can choose $\varepsilon>0$ such that $\gamma<\frac{2\alpha-1+\varepsilon/2}{\alpha+\varepsilon/2}$.

On $A_1$ we use $(\xi_0^2+\xi^2)^{1-\gamma/2}\geq|\xi_0|^{2-\gamma}$ and by Theorem \ref{main}(b) we have
$$(G(\xi)-\xi_0)^2\geq\left(|\xi_0|-|G(\xi)|\right)^2\geq\left(|\xi_0|-K_3|\xi|^q\right)^2\geq\left(|\xi_0|/2\right)^2$$
so that, using also part (a) of Theorem \ref{main}, we get by \eqref{intnew}
$$H(\xi_0,\xi)\leq4\,\frac{1+F(\xi)}{\xi_0^2}\leq K\,\frac{|\xi|^{\alpha+\varepsilon/2}}{\xi_0^2}.$$
Hence, using symmetry with respect to $\xi_0$, we get
\begin{align*}
\int_{A_1}\frac{H(\xi_0,\xi)}{(\xi_0^2+\xi^2)^{1-\gamma/2}}\,d\tilde\xi & \leq K\int_{A_1}\frac{|\xi|^{\alpha+\varepsilon/2}}{|\xi_0|^{4-\gamma}}\,d\tilde\xi\\
& = K\int_{2K''_3}^\infty\frac1{\xi_0^{4-\gamma}}\int_{\{|\xi|^q\leq\tau^q/(2K''_3)\,\xi_0\}}|\xi|^{\alpha+\varepsilon/2}\,d\xi\,d\xi_0\\
& \leq K\int_{2K''_3}^\infty\frac1{\xi_0^{4-\gamma}}\int_{\{|\xi|\leq\tau/(2K''_3)^{1/q}\,\xi_0^{1/q}\}}|\xi_0|\,d\xi\,d\xi_0\\
& \leq K\int_{2K''_3}^\infty\frac{\xi_0^{1+\frac1{q}}}{\xi_0^{4-\gamma}}\,d\xi_0=K\int_{2K''_3}^\infty\frac{1}{\xi_0^{3-\gamma-\frac1{q}}}\,d\xi_0<\infty,
\end{align*}
since $\gamma<2-\alpha^{-1}<2-(\alpha+\varepsilon/2)^{-1}=2-q^{-1}$.

On $A_2$ we use $(\xi_0^2+\xi^2)^{1-\gamma/2}\geq|\xi|^{2-\gamma}$ and by Theorem \ref{main}(a) we get
$$H(\xi_0,\xi)\leq\frac1{1+F(\xi)}\leq K\,\frac1{|\xi|^{\alpha}}.$$
Hence, using symmetry with respect to $\xi$, we get
\begin{align*}
\int_{A_2}\frac{H(\xi_0,\xi)}{(\xi_0^2+\xi^2)^{1-\gamma/2}}\,d\tilde\xi & \leq K\int_{A_2}\frac1{|\xi|^{\alpha}}\,\frac1{|\xi|^{2-\gamma}}\,d\tilde\xi\\
& = K\int_{\tau}^\infty\frac1{\xi^{2-\gamma+\alpha}}\int_{\{|\xi_0|\leq2K''_3|\xi|/\tau\}}d\xi_0\,d\xi\\
& =K\int_{\tau}^\infty\frac{1}{\xi^{1-\gamma+\alpha}}\,d\xi<\infty,
\end{align*}
since $\gamma<\alpha$.

On $A_3$ we use $(\xi_0^2+\xi^2)^{1-\gamma/2}\geq|\xi_0|^{2-\gamma}$ as on $A_1$ and by Theorem \ref{main}(a) we get
$$H(\xi_0,\xi)\leq\frac1{1+F(\xi)}\leq K\,\frac1{|\xi|^{\alpha}}$$
as on $A_2$. Hence, using symmetry with respect to $\xi_0$ and $\xi$, we get
\begin{align*}
\int_{A_3}\frac{H(\xi_0,\xi)}{(\xi_0^2+\xi^2)^{1-\gamma/2}}\,d\tilde\xi & \leq K\int_{A_3}\frac1{|\xi|^{\alpha}}\,\frac1{|\xi_0|^{2-\gamma}}\,d\tilde\xi\\
& = K\int_{\tau}^\infty\frac1{\xi^{\alpha}}\int_{2K''_3|\xi|/\tau}^{2K''_3(|\xi|/\tau)^q}\frac1{|\xi_0|^{2-\gamma}}\,d\xi_0\,d\xi\\
& \leq K\int_{\tau}^\infty\frac{\xi^{q(\gamma-1)}}{\xi^{\alpha}}\,d\xi=K\int_{\tau}^\infty\frac{1}{\xi^{2\alpha+\varepsilon/2-\gamma(\alpha+\varepsilon/2)}}\,d\xi<\infty,
\end{align*}
since $\gamma<\frac{2\alpha-1+\varepsilon/2}{\alpha+\varepsilon/2}$ by our choice of $\varepsilon>0$.

Altogether we have shown that $I_\gamma<\infty$ for every $0<\gamma<2-\alpha^{-1}$ so that $\dim_{\rm H}G_X([0\,,1])\geq 2-\alpha^{-1}$ almost surely for $\alpha>1$.

Since $\dim_{\rm H}G_X([0\,,1])\leq\dim_{\rm P}G_X([0\,,1])$ it remains to prove the upper bound for the packing dimension.  In the following we obtain the upper bound in a similar manner as in the proof of Theorem 3.1 in \cite{KX08}. In case $\alpha\in(0,1)$ let $\eta>1$ be arbitrary and choose $\varepsilon>0$ such that $\alpha-1+\varepsilon<0$ and $\eta>1+\varepsilon$. Recall that $\tau>1$ in Theorem \ref{main}.  Then it follows from \eqref{def:W}, \eqref{intnew} and Theorem \ref{main} that for $r\in(0,1)$ and hence $r<r^{1-\alpha-\varepsilon}$ we have
\begin{align*}
W(r) & =\int_{\rr^2} \Re\left( \frac{1}{1+\tilde\psi(\tilde\xi/r)}\right)\frac{d\tilde\xi}{(1+ \xi_0^2)(1+\xi^2)}\\
& \geq K\int_{\tau r}^\infty\int_{\tau r}^\infty\frac{(\xi/r)^{\alpha}}{(\xi/r)^{2\alpha+\varepsilon}+\left((\xi/r)^{\alpha+\varepsilon/2}+\xi_0/r\right)^2}\,\frac{d\xi}{1+\xi^2}\,\frac{d\xi_0}{1+\xi_0^2}\\
& \geq K r^{\alpha+\varepsilon}\int_{\tau r}^\infty\int_{\tau }^\infty\frac{\xi^{\alpha}}{\xi^{2\alpha+\varepsilon}+\left(\xi^{\alpha+\varepsilon/2}+r^{\alpha-1+\varepsilon/2}\xi_0\right)^2}\,\frac{d\xi}{1+\xi^2}\,\frac{d\xi_0}{1+\xi_0^2}\\
& \geq K r^{\alpha+\varepsilon}\int_{\tau r}^\infty\,\frac1{1+(1+r^{\alpha-1+\varepsilon/2}\xi_0)^2}\int_{\tau }^\infty\frac{d\xi}{\xi^{\alpha+\varepsilon}(1+\xi)^2}\,\frac{d\xi_0}{1+\xi_0^2}\\
& \geq K r^{\alpha+\varepsilon}\int_{\tau r^{1-\alpha-\varepsilon}}^1\frac{d\xi_0}{r^{2\alpha-2+\varepsilon}\xi_0^2(1+\xi_0^2)}\\
& \geq  K r^{2-\alpha}\int_{\tau r^{1-\alpha-\varepsilon}}^1\frac{d\xi_0}{\xi_0^2}= Kr^{1+\varepsilon}.
\end{align*}
This implies $\lim_{r \to 0^+} r^{-\eta} {W}(r) = \infty$, since $\eta>1+\varepsilon$ by our choice of $\varepsilon>0$. Hence, by \eqref{Eq:KX08} we obtain $\dimp G_X([0\,,1])\leq1$ almost surely, since $\eta>1$ is arbitrary.

In case $\alpha\geq1$ let $\eta > 2-\alpha^{-1}$ be arbitrary and choose $\varepsilon>0$ such that $\alpha>1+\varepsilon/2$ and $\eta > 2-\alpha^{-1}+2\varepsilon$. Note that
$$\beta:=\frac{2-\alpha-\alpha^{-1}+\varepsilon}{1-\alpha-\varepsilon}=1+\frac{1-\alpha^{-1}+2\varepsilon}{1-\alpha-\varepsilon}<1$$
and observe that $1-(\alpha+\varepsilon/2)(1-\beta)<0$ by elementary calculations.
Then it follows from \eqref{def:W}, \eqref{intnew} and Theorem \ref{main} that for $r\in(0,1)$ and hence $r<r^{\beta}$ we have
\begin{align*}
W(r) & =\int_{\rr^2} \Re\left( \frac{1}{1+\tilde\psi(\tilde\xi/r)}\right)\frac{d\tilde\xi}{(1+ \xi_0^2)(1+\xi^2)}\\
& \geq K\int_{\tau r}^\infty\int_{\tau r}^\infty\frac{(\xi/r)^{\alpha}}{(\xi/r)^{2\alpha+\varepsilon}+\left((\xi/r)^{\alpha+\varepsilon/2}+\xi_0/r\right)^2}\,\frac{d\xi_0}{1+\xi_0^2}\,\frac{d\xi}{1+\xi^2}\\
& \geq K r^{2-\alpha}\int_{\tau r}^\infty\int_{\tau }^\infty\frac{\xi^{\alpha}}{\xi^{2\alpha+\varepsilon}r^{2-2\alpha-\varepsilon}+\left(\xi^{\alpha+\varepsilon/2}r^{1-\alpha-\varepsilon/2}+\xi_0\right)^2}\,\frac{d\xi_0}{1+\xi_0^2}\,\frac{d\xi}{1+\xi^2}\\
& \geq K r^{2-\alpha}\int_{\tau r^\beta}^1\,\frac{\xi^{\alpha}}{\xi^{2\alpha+\varepsilon}r^{2-2\alpha-\varepsilon}+\left(\xi^{\alpha+\varepsilon/2}r^{1-\alpha-\varepsilon/2}+1\right)^2}\int_{\tau }^\infty\frac{d\xi_0}{\xi_0^2(1+\xi_0)^2}\,\frac{d\xi}{1+\xi^2}.
\end{align*}
Since for $\xi\geq\tau r^\beta$ we have $\xi^{\alpha+\varepsilon/2}r^{1-\alpha-\varepsilon/2}\geq K\,r^{1-(\alpha+\varepsilon/2)(1-\beta)}\to\infty$ as $r\to0^+$ by our choice of $\beta$, for sufficiently small $r\in(0,1)$ we get
\begin{align*}
W(r) & \geq K r^{2-\alpha}\int_{\tau r^\beta}^1\,\frac{\xi^{\alpha}}{\xi^{2\alpha+\varepsilon}r^{2-2\alpha-\varepsilon}}\,\frac{d\xi}{1+\xi^2}\\
& \geq K r^{\alpha+\varepsilon}\int_{\tau r^{\beta}}^1\frac{d\xi}{\xi^{\alpha+\varepsilon}}
\geq  K r^{\alpha+\varepsilon+\beta(1-\alpha-\varepsilon)}= Kr^{2-\alpha^{-1}+2\varepsilon}.
\end{align*}
This implies $\lim_{r \to 0^+} r^{-\eta} {W}(r) = \infty$, since $\eta > 2-\alpha^{-1}+2\varepsilon$ by our choice of $\varepsilon>0$. Hence, by \eqref{Eq:KX08} we obtain
$\dimp G_X([0\,,1])\leq2-\alpha^{-1}$ almost surely, since $\eta>2-\alpha^{-1}$ is arbitrary, concluding the proof of Proposition \ref{dimgraph1}.
\end{proof}

\begin{proof}[Proof of Theorem \ref{dimgraph}]
We will first prove the lower bounds. In case $\alpha_1\leq d_1$ clearly $\dim_{\rm H}G_X([0\,,1])\geq1$ by projecting the graph $\{(t,X(t))\}_{t\geq0}$ onto the first (deterministic) component and by projection of the graph onto the range we get $\dim_{\rm H}G_X([0\,,1])\geq\alpha_1$ almost surely by \eqref{dimrange}. In case $\alpha_1>d_1$ we have $d_1=1$, hence by projecting the graph $\{(t,X(t))\}_{t\geq0}$ onto the subgraph $\{(t,X^{(1)}(t))\}_{t\geq0}$ we get $\dim_{\rm H}G_X([0\,,1])\geq\dim_{\rm H}G_{X^{(1)}}([0\,,1])=2-\alpha_1^{-1}$ almost surely by Proposition \ref{dimgraph1} and a projection of the graph onto the range yields $\dim_{\rm H}G_X([0\,,1])\geq 1+\alpha_2(1+\alpha_1^{-1})$ almost surely by \eqref{dimrange}.

Since $\dim_{\rm H}G_X([0\,,1])\leq\dim_{\rm P}G_X([0\,,1])$, again it remains to prove the upper bound for the packing dimension. To prove this upper bound, we rewrite the tail indices $\alpha_0=1$ and $\alpha_1>\cdots>\alpha_p$ for each of the $d+1$ coordinates so that $\tilde\alpha_0\geq\cdots\geq\tilde\alpha_d$. In principle, we now have to distinguish four cases:
\begin{itemize}
\item[1.)] $\alpha_1\leq1=d_1$, then we have $\tilde\alpha_0=\alpha_0=1$, $\tilde\alpha_1=\alpha_1$ and we need to show that $r^{-\eta}W(r)\to\infty$ as $r\to0^+$ for all $\eta>1$.
\item[2.)] $1<\alpha_1<2\leq d_1$, then $\tilde\alpha_0=\alpha_1=\tilde\alpha_1$ and we have to show that $r^{-\eta}W(r)\to\infty$ as $r\to0^+$ for all $\eta>\alpha_1$.
\item[3.)] $\alpha_1>1=d_1\geq\alpha_2$, then $\tilde\alpha_0=\alpha_1$, $\tilde\alpha_1=\alpha_0=1$ and we have to show that $r^{-\eta}W(r)\to\infty$ as $r\to0^+$ for all $\eta>2-\alpha_1^{-1}$.
\item[4.)] $\alpha_1>\alpha_2>1=d_1$, then $\tilde\alpha_0=\alpha_1$, $\tilde\alpha_1=\alpha_2$ and we have to show that $r^{-\eta}W(r)\to\infty$ as $r\to0^+$ for all $\eta>1+\alpha_2(1-\alpha_1^{-1})$.
\end{itemize}
Note that these four cases can be summarized in the sense that we have to show $r^{-\eta}W(r)\to\infty$ as $r\to0^+$ 
for all $\eta>1+\tilde\alpha_1(1-\tilde\alpha_0^{-1})\geq1$, where $2\geq\tilde\alpha_0\geq\tilde\alpha_1$ and $\tilde\alpha_0\geq1$. 
We write $\tilde\xi=(\tilde\xi_0,\ldots,\tilde\xi_d)\in\rr^{d+1}$ and define
$$i^\ast:=\min\{i=0,\ldots,d:\,\tilde\alpha_i=1=\alpha_0\}.$$
Then it follows from \eqref{def:W}, \eqref{intnew} and Theorem \ref{main} that for $r\in(0,1)$ we have
\begin{align*}
W(r) & =\int_{\rr^{d+1}} \Re\left( \frac{1}{1+\tilde\psi(\tilde\xi/r)}\right)\frac{d\tilde\xi}{\prod_{i=0}^d(1+ \tilde\xi_i^2)}\\
& \geq K\int_{\{|\tilde\xi_i|\geq\tau r,\,0\leq i\leq d\}}\frac{\sum_{i\not= i^\ast}|\tilde\xi_i/r|^{\tilde\alpha_i}}
{\big(\|\tilde\xi/r\|^{\varepsilon/2}\displaystyle\sum_{i\not= i^\ast}|\tilde\xi_i/r|^{\tilde\alpha_i}\big)^2+\big(\|\tilde\xi/r\|^{\varepsilon/2}\displaystyle\sum_{i\not= i^\ast}|\tilde\xi_i/r|^{\tilde\alpha_i}+|\tilde\xi_{i^\ast}/r|\big)^2}\,\frac{d\tilde\xi}{\prod_{i=0}^d(1+\tilde\xi_i^2)}\\
& \geq K\int_{\left\{{\tilde\xi_i\geq\tau r,\,0\leq i\leq d\atop \tilde\xi_i\leq1,\,2\leq i\leq d}\right\}}\frac{\sum_{i\not= i^\ast}(\tilde\xi_i/r)^{\tilde\alpha_i}}{\big(\sum_{i= 0}^d(\tilde\xi_i/r)^{\tilde\alpha_i}\big)^2}\,\frac{d\tilde\xi}{\|\tilde\xi/r\|^{\varepsilon}(1+\tilde\xi_0^2)(1+\tilde\xi_1^2)}\\
& \geq K\int_{\{\tau r\leq\tilde\xi_i\leq1,\,2\leq i\leq d\}}\,d\tilde\xi_2\cdots d\tilde\xi_d\\
& \qquad\cdot\int_{\tau r}^\infty\int_{\tau r}^\infty
\frac{C+\sum_{i\in\{0,1\}\setminus\{ i^\ast\}}(\tilde\xi_i/r)^{\tilde\alpha_i}}{(A+(\tilde\xi_1/r)^{\tilde\alpha_1}+(\tilde\xi_0/r)^{\tilde\alpha_0})^2(B+(\tilde\xi_1/r)^{2}+(\tilde\xi_0/r)^{2})^{\varepsilon/2}}\,\frac{d\tilde\xi_0}{1+\tilde\xi_0^2}\,\frac{d\tilde\xi_1}{1+\tilde\xi_1^2},
\end{align*}
where
$$A=\sum_{i=2}^d(\tilde\xi_i/r)^{\tilde\alpha_i},\qquad B=\sum_{i=2}^d(\tilde\xi_i/r)^{2},\qquad C=\sum_{i\in\{2,\ldots,d\}\setminus\{i^\ast\}}(\tilde\xi_i/r)^{\tilde\alpha_i}.$$
For fixed $(\tilde\xi_2,\ldots,\tilde\xi_d)\in[\tau r,1]^{d-1}$ and thus fixed $A,B,C$ we consider the inner integrals
$$I(r):=\int_{\tau r}^\infty\int_{\tau r}^\infty
\frac{C+\sum_{i\in\{0,1\}\setminus\{ i^\ast\}}(\tilde\xi_i/r)^{\tilde\alpha_i}}{(A+(\tilde\xi_1/r)^{\tilde\alpha_1}+(\tilde\xi_0/r)^{\tilde\alpha_0})^2(B+(\tilde\xi_1/r)^{2}+(\tilde\xi_0/r)^{2})^{\varepsilon/2}}\,\frac{d\tilde\xi_0}{1+\tilde\xi_0^2}\,\frac{d\tilde\xi_1}{1+\tilde\xi_1^2}.$$

In case $i^\ast=0$, i.e.\ $1=\tilde\alpha_0\geq\tilde\alpha_1$, let $\eta>1$ be arbitrary and choose $\varepsilon>0$ such that $\eta>1+2\varepsilon$. Then we have
\begin{align*}
I(r) & =\int_{\tau r}^\infty\int_{\tau r}^\infty
\frac{C+(\tilde\xi_1/r)^{\tilde\alpha_1}}{(A+(\tilde\xi_1/r)^{\tilde\alpha_1}+(\tilde\xi_0/r)^{\tilde\alpha_0})^2}\,\frac{d\tilde\xi_1}{(B+(\tilde\xi_1/r)^{2}+(\tilde\xi_0/r)^{2})^{\varepsilon/2}(1+\tilde\xi_1^2)}\,\frac{d\tilde\xi_0}{1+\tilde\xi_0^2}\\
& \geq K\, r^{\tilde\alpha_1+\varepsilon}\int_{\tau r}^\infty\int_{\tau}^\infty
\frac{\tilde\xi_1^{\tilde\alpha_1}}{(r^{\tilde\alpha_1}A+\tilde\xi_1^{\tilde\alpha_1}+r^{\tilde\alpha_1-\tilde\alpha_0}\tilde\xi_0^{\tilde\alpha_0})^2}\,\frac{d\tilde\xi_1}{(r^2B+\tilde\xi_1^{2}+\tilde\xi_0^{2})^{\varepsilon/2}(1+\tilde\xi_1^2)}\,\frac{d\tilde\xi_0}{1+\tilde\xi_0^2}\\
& \geq K\, r^{\tilde\alpha_1+\varepsilon}\int_{\tau r^{1-\tilde\alpha_1-\varepsilon}}^1
\frac{1}{(1+r^{\tilde\alpha_1-\tilde\alpha_0}\tilde\xi_0^{\tilde\alpha_0})^2}\int_{\tau}^\infty\frac{d\tilde\xi_1}{(r^2B+\tilde\xi_1^{2}+1)^{\varepsilon/2}\tilde\xi_1^{\tilde\alpha_1}(1+\tilde\xi_1^2)}\,\frac{d\tilde\xi_0}{1+\tilde\xi_0^2},
\end{align*}
where in the last step we used $r^{\tilde\alpha_1}A\leq d-1$, $\tilde\xi_1^{-\tilde\alpha_1}\leq\tau^{-\tilde\alpha_1}$, $r^2B\leq d-1$ and $\tilde\xi_0\leq 1$. Since the inner integral is positive and finite, $\tilde\alpha_0=1$ and for $\tilde\xi_0\geq \tau r^{1-\tilde\alpha_1-\varepsilon}$ we have $r^{\tilde\alpha_1-\tilde\alpha_0}\tilde\xi_0^{\tilde\alpha_0}\geq K\, r^{-\varepsilon}\to\infty$ as $r\to0^+$, for sufficiently small $r\in(0,1)$ we further get
\begin{align*}
I(r) & \geq K\, r^{\tilde\alpha_1+\varepsilon}\int_{\tau r^{1-\tilde\alpha_1-\varepsilon}}^1
\frac{d\tilde\xi_0}{r^{2\tilde\alpha_1-2\tilde\alpha_0}\tilde\xi_0^{2\tilde\alpha_0}(1+\tilde\xi_0^2)}\\
& \geq K\, r^{2-\tilde\alpha_1+\varepsilon}\int_{\tau r^{1-\tilde\alpha_1-\varepsilon}}^1
\frac{d\tilde\xi_0}{\tilde\xi_0^{2}}\geq K\,r^{1+2\varepsilon}.
\end{align*}
This implies $\lim_{r \to 0^+} r^{-\eta} {W}(r) = \infty$, since $\eta>1+2\varepsilon$ by our choice of $\varepsilon>0$. Hence, by \eqref{Eq:KX08} we obtain $\dimp G_X([0\,,1])\leq1=1+\tilde\alpha_1(1-\tilde\alpha_0^{-1})$ almost surely, since $\eta>1$ is arbitrary.

In case $i^\ast\geq1$, i.e.\ $\tilde\alpha_0>1\geq\tilde\alpha_1$, let $\eta>1+\tilde\alpha_1(1-\tilde\alpha_0^{-1})$ be arbitrary and choose $\varepsilon>0$ such that $\eta>1+\tilde\alpha_1(1-\tilde\alpha_0^{-1})+2\varepsilon$. Note that
$$\beta:=\frac{1+\tilde\alpha_1(1-\tilde\alpha_0^{-1})-\tilde\alpha_0+\varepsilon}{1-2\tilde\alpha_0}=1+\frac{\tilde\alpha_1(1-\tilde\alpha_0^{-1})+\varepsilon}{1-2\tilde\alpha_0}<1$$
and observe that $\tilde\alpha_1-\tilde\alpha_0(1-\beta)=\varepsilon\tilde\alpha_0/(1-2\tilde\alpha_0)<0$ by elementary calculations. Then we have
\begin{align*}
I(r) & \geq\int_{\tau r}^\infty\int_{\tau r}^\infty
\frac{C+(\tilde\xi_0/r)^{\tilde\alpha_0}}{(A+(\tilde\xi_1/r)^{\tilde\alpha_1}+(\tilde\xi_0/r)^{\tilde\alpha_0})^2}\,\frac{d\tilde\xi_1}{(B+(\tilde\xi_1/r)^{2}+(\tilde\xi_0/r)^{2})^{\varepsilon/2}(1+\tilde\xi_1^2)}\,\frac{d\tilde\xi_0}{1+\tilde\xi_0^2}\\
& \geq K\, r^{2\tilde\alpha_1-\tilde\alpha_0+\varepsilon}\int_{\tau r}^\infty\int_{\tau}^\infty
\frac{\tilde\xi_0^{\tilde\alpha_0}}{(r^{\tilde\alpha_1}A+\tilde\xi_1^{\tilde\alpha_1}+r^{\tilde\alpha_1-\tilde\alpha_0}\tilde\xi_0^{\tilde\alpha_0})^2}\,\frac{d\tilde\xi_1}{(r^2B+\tilde\xi_1^{2}+\tilde\xi_0^{2})^{\varepsilon/2}(1+\tilde\xi_1^2)}\,\frac{d\tilde\xi_0}{1+\tilde\xi_0^2}\\
& \geq K\, r^{2\tilde\alpha_1-\tilde\alpha_0+\varepsilon}\int_{\tau r^\beta}^1
\frac{\tilde\xi_0^{\tilde\alpha_0}}{(1+r^{\tilde\alpha_1-\tilde\alpha_0}\tilde\xi_0^{\tilde\alpha_0})^2}\int_{\tau}^\infty\frac{d\tilde\xi_1}{(r^2B+\tilde\xi_1^{2}+1)^{\varepsilon/2}\tilde\xi_1^{2\tilde\alpha_1}(1+\tilde\xi_1^2)}\,\frac{d\tilde\xi_0}{1+\tilde\xi_0^2},
\end{align*}
where in the last step we used $r^{\tilde\alpha_1}A\leq d-1$, $\tilde\xi_1^{-\tilde\alpha_1}\leq\tau^{-\tilde\alpha_1}$, $r^2B\leq d-1$ and $\tilde\xi_0\leq 1$. Since the inner integral is positive and finite and for $\tilde\xi_0\geq\tau r^\beta$ we have $r^{\tilde\alpha_1-\tilde\alpha_0}\tilde\xi_0^{\tilde\alpha_0}\geq K\,r^{\tilde\alpha_1-\tilde\alpha_0(1-\beta)}\to\infty$ as $r\to0^+$ by our choice of $\beta$, for sufficiently small $r\in(0,1)$ we further get
\begin{align*}
I(r) & \geq K\, r^{2\tilde\alpha_1-\tilde\alpha_0+\varepsilon}\int_{\tau r^{\beta}}^1
\frac{d\tilde\xi_0}{r^{2\tilde\alpha_1-2\tilde\alpha_0}\tilde\xi_0^{2\tilde\alpha_0}(1+\tilde\xi_0^2)}\\
& \geq K\, r^{\tilde\alpha_0+\varepsilon}\int_{\tau r^{\beta}}^1
\frac{d\tilde\xi_0}{\tilde\xi_0^{2\tilde\alpha_0}}\geq K\,r^{\tilde\alpha_0+\varepsilon+\beta(1-2\tilde\alpha_0)}=K\,r^{1+\tilde\alpha_1(1-\tilde\alpha_0^{-1})+2\varepsilon}.
\end{align*}
This implies $\lim_{r \to 0^+} r^{-\eta} {W}(r) = \infty$, since $\eta>1+\tilde\alpha_1(1-\tilde\alpha_0^{-1})+2\varepsilon$ by our choice of $\varepsilon>0$. Hence, by \eqref{Eq:KX08} we obtain $\dimp G_X([0\,,1])\leq 1+\tilde\alpha_1(1-\tilde\alpha_0^{-1})$ almost surely, since $\eta>1+\tilde\alpha_1(1-\tilde\alpha_0^{-1})$ is arbitrary, concluding the proof.
\end{proof}

\subsection{Recurrence and Transience}

A L\'evy process $X=\{X(t)\}_{t\geq0}$ in $\rd$ is called {\it recurrent} if $\liminf_{t\to\infty}\|X(t)\|=0$ almost surely and it is called {\it transient} if $\lim_{t\to\infty}\|X(t)\|=\infty$. Due to dichotomy, every L\'evy process is either recurrent or transient; e.g., see Theorem 35.4 in \cite{Sat}. In case of a full, strictly operator semistable L\'evy process recurrence and transience of $X$ is fully characterized by the following results.
\begin{itemize}
\item $d\geq3$: Every full L\'evy process in $\rd$ is transient by Theorem 37.8 in \cite{Sat}.
\item $d=2$: By Choi and Sato \cite{ChoSat} the only recurrent strictly operator-semistable L\'evy processes in $\rr^2$ are Gaussian with $\alpha_1=2$ and $d_1=2$; see also Theorem 37.18 in \cite{Sat}.
\item $d=1$: By Theorems 3.1 and 3.2 in Choi \cite{Choi} a strictly $\alpha$-semistable L\'evy process in $\rr$ is recurrent if $1\leq\alpha\leq2$ and it is transient if $0<\alpha<1$.
\end{itemize}
Hence, together with \eqref{dimr2} and \eqref{dimr1} we immediately get a characterization of recurrence and transience by the Hausdorff dimension of the range.

\begin{cor}\label{recdim}
A full, strictly operator semistable L\'evy process $X=\{X(t)\}_{t\geq0}$ in $\rd$ is recurrent if and only if $\dim_{\rm H}X([0,1])=d$ almost surely.
\end{cor}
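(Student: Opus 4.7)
The plan is to do a case analysis on the ambient dimension $d$, combining the three recurrence/transience dichotomies listed just above with the Hausdorff dimension formulas \eqref{dimr2} and \eqref{dimr1} applied to $B=[0,1]$ (for which $\dim_{\rm H}B = 1$). In each case I would check that ``recurrent'' and ``$\dim_{\rm H}X([0,1]) = d$'' are either both true or both false.

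For $d \geq 3$ both conditions should be false. Transience follows from Theorem 37.8 in \cite{Sat}. For the dimension, I would observe that in either branch of \eqref{dimr2} the value is at most $2$: the first branch gives $\alpha_1 \leq 2$ directly; in the second branch, $\alpha_1 > d_1$ together with the universal bound $\alpha_1 \leq 2$ forces $d_1 = 1$, and then $\alpha_2 < \alpha_1 \leq 2$ yields $\alpha_2(1 - \alpha_1^{-1}) < 2 \cdot \tfrac12 = 1$, so $\dim_{\rm H}X([0,1]) < 2 \leq d-1 < d$.

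For $d = 2$ I would invoke Choi and Sato \cite{ChoSat}: recurrence is equivalent to $X$ being Gaussian with $\alpha_1 = 2$ and $d_1 = 2$. If this holds, the first branch of \eqref{dimr2} applies and $\dim_{\rm H}X([0,1]) = \alpha_1 = 2 = d$. Conversely, assuming $\dim_{\rm H}X([0,1]) = 2$, I would rule out the second branch: there $d_1 = 1$ and the formula would demand $\alpha_2(1-\alpha_1^{-1}) = 1$, contradicting the strict bound $\alpha_2(1-\alpha_1^{-1}) < 1$ established in the previous paragraph. Hence the first branch applies with $\alpha_1 = 2$, forcing $d_1 = 2$; the remark following Theorem 7.1.10 in \cite{MS} recalled in Section \ref{sec1} then identifies $X$ as Gaussian on $\rr^2$, which by \cite{ChoSat} is recurrent. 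For $d = 1$, Choi \cite{Choi} gives recurrence iff $1 \leq \alpha \leq 2$, while \eqref{dimr1} gives $\dim_{\rm H}X([0,1]) = \min\{\alpha,1\}$, which equals $1 = d$ iff $\alpha \geq 1$, closing the argument.

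The only delicate step I anticipate is the reverse implication for $d=2$, where the main obstacle is extracting the full Gaussian structure from the single numerical identity $\dim_{\rm H}X([0,1]) = 2$. The strict spectral inequality $\alpha_2 < \alpha_1 \leq 2$ is precisely what rules out the non-Gaussian ``second branch'' possibility and pins down $\alpha_1 = 2$, $d_1 = 2$; the remaining identification of $X$ with its Gaussian part is then immediate from the structural results in Section \ref{sec1}.
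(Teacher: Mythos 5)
Your proposal is correct and follows exactly the route the paper takes: the paper's proof of this corollary is simply the observation that the recurrence/transience classification for $d\geq 3$, $d=2$, $d=1$ combined with \eqref{dimr2} and \eqref{dimr1} (applied with $B=[0,1]$) ``immediately'' yields the equivalence. Your case analysis just writes out the arithmetic (in particular the bound $\alpha_2(1-\alpha_1^{-1})<1$ and the identification $\alpha_1=2$, $d_1=d$ with the Gaussian case) that the paper leaves implicit.
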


A possible interpretation of this result is that a strictly operator semistable L\'evy process is recurrent if and only if its sample paths are almost surely ``space filling''. Note that Corollary \ref{recdim} is not true for arbitrary L\'evy processes as follows. By Theorem 37.5 in \cite{Sat} a L\'evy process $X$ in $\rd$ is recurrent if and only if
\begin{equation}\label{reccrit}
\lim_{q\downarrow0}\int_{\{\|\xi\|<1\}}\Re\left(\frac1{q-\psi(\xi)}\right)\,d\xi=\infty.
\end{equation}
Hence recurrence and transience are determined by the behavior of $\psi(\xi)$ near the origin $\xi=0$, i.e.\ the tail behavior of the process, whereas by \eqref{Hdimcrit} the Hausdorff dimension of the range is determined by the behavior of $\psi(\xi)$ as $\|\xi\|\to\infty$, i.e.\ the local behavior of the process. To illustrate this we give the following example.

\begin{example}
Let $\phi$ be the symmetric L\'evy measure on $\rr^\ast=\rr\setminus\{0\}$ with Lebesgue density
$$g(x)=\begin{cases}|x|^{-(\beta+1)} & \text{ if }0<|x|\leq1,\\
|x|^{-(\alpha+1)} & \text{ if }|x|>1,\end{cases}$$
where $\alpha>0$ and $\beta<2$ due to $\int_{\rr^\ast}(1\wedge x^2)\,d\phi(x)<\infty$. Then it can be easily shown using the criteria \eqref{reccrit} and \eqref{Hdimcrit} that the L\'evy process $X=\{X(t)\}_{t\geq0}$ with L\'evy exponent
$$\psi(\xi)=\int_{\rr^\ast}\left(e^{i\xi x}-1-\frac{i\xi x}{1+x^2}\right)\,d\phi(x)=\int_{\rr^\ast}\left(\cos(\xi x)-1\right)g(x)\,dx$$
is recurrent if and only if $\alpha\geq1$ and we have $\dim_{\rm H}X([0,1])=\max\{0,\min\{\beta,1\}\}$ almost surely. 
For $\beta<1\leq\alpha$ or $\alpha<1\leq\beta$ we see that the statement of Corollary \ref{recdim} fails to hold.
\end{example}

\begin{remark}
It is also possible to characterize recurrence and transience of a full, strictly operator semistable L\'evy process $X$ 
by the Hausdorff dimension of its graph, but we have to distinguish between the cases $d=1$ and $d\geq2$. 
A comparison of the above results of Choi and Sato with \eqref{dimg2} and \eqref{dimg1} easily gives that a full, 
strictly operator semistable L\'evy process is recurrent if and only if
$$\dim_{\rm H}G_X([0,1])=\begin{cases}2-\alpha^{-1 } & \text{ if }d=1\\ d  & \text{ if }d\geq2\end{cases}$$
almost surely.
\end{remark}

\subsection{Double points of operator semistable L\'evy processes}

A point $x\in\rd$ is called a $k$-{\it multiple point} of the stochastic process $X$ for some $k\in\nat\setminus\{1\}$ if there exist $0\leq t_1<\cdots<t_k$ such that $X(t_1)=\cdots=X(t_k)=x$. By $M_X(k)$ we denote the set of all $k$-multiple points of $X$. Recently, Luks and Xiao \cite{LX} derived a general formula for the Hausdorff dimension of $M_X(k)$ for symmetric, absolutely continuous L\'evy processes in terms of the L\'evy exponent $\psi$. In Theorem 1 of \cite{LX} they proved that
$$\dim_{\rm H}M_X(k)=d-\inf\bigg\{\beta\in(0,d]:\,\int_{\rr^{kd}}\frac{1}{1+\left\|\sum_{j=1}^k\xi^{(j)}\right\|^\beta}\prod_{j=1}^k\frac1{1+\psi(\xi^{(j)})}\,d\bar\xi<\infty\bigg\},$$
almost surely with the convention $\inf\emptyset=d$, where $\bar\xi=(\xi^{(1)},\ldots,\xi^{(d)})$ for $\xi^{(j)}\in\rd$. Moreover, in case of symmetric operator stable L\'evy processes $X$ with exponent $E$, Luks and Xiao \cite{LX} were able to calculate $M_X(2)$ explicitly, based only on the fact that for $\varepsilon>0$ there exists $\tau>1$ such that for some $K>1$ it holds that
$$\frac{K^{-1}\|\xi\|^{-\varepsilon}}{\sum_{i=1}^p\|\xi_i\|^{\alpha_i}}\leq\frac1{1+\psi(\xi)}\leq\frac{K\|\xi\|^{\varepsilon}}{\sum_{i=1}^p\|\xi_i\|^{\alpha_i}}\quad\text{ for all }\|\xi\|>\tau,$$
which is known from (4.2) of \cite{MX}. Since from Corollary \ref{proppsi} we know that the same bounds hold true also for symmetric operator semistable L\'evy processes, the explicit dimension results of \cite{LX} also hold in this more general situation. In the following we reformulate Theorem 2 of \cite{LX} for the semistable case, where we rearrange the distinct real parts $\alpha_1>\cdots>\alpha_p$ of the eigenvalues of the exponent $E$ as $\tilde\alpha_1\geq\cdots\geq\tilde\alpha_d$ including their multiplicities.

\begin{cor}\label{Co_M2}
Let $X=\{X(t)\}_{t\geq0}$ be a symmetric operator semistable L\'evy process in $\rd$ with exponent $E$. Then for the double points of $X$ we have almost surely:
\begin{itemize}
\item[(a)] If $d=2$ then $\quad\dim_{\rm H}M_X(2)=\min\left\{\tilde\alpha_1(2-\tilde\alpha_1^{-1}-\tilde\alpha_2^{-1}),2\tilde\alpha_2(1-\tilde\alpha_1^{-1})\right\}$.
\item[(b)] If $d=3$ then $\quad\dim_{\rm H}M_X(2)=\tilde\alpha_1(2-\tilde\alpha_1^{-1}-\tilde\alpha_2^{-1}-\tilde\alpha_3^{-1})$.
\item[(c)] If $d\geq4$ then $\quad M_X(2)=\emptyset$.
\end{itemize}
Note that a negative Hausdorff dimension means that $M_X(2)=\emptyset$ almost surely.
\end{cor}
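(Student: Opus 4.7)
The plan is to reduce the statement directly to the proof of Theorem 2 of Luks and Xiao \cite{LX}, whose only input concerning $\psi$ is the two-sided spectral bound on $1/(1+\psi(\xi))$. First I would observe that, since $X$ is symmetric, the drift vector in the L\'evy--Khintchine formula may be taken to be zero and the L\'evy measure is symmetric, so the imaginary part $G(\xi)=\Im\psi(\xi)$ vanishes and $\psi$ is a nonnegative real-valued function. In particular $1/(1+\psi(\xi))=\Re(1/(1+\psi(\xi)))$, and Corollary \ref{proppsi} immediately delivers, for each $\varepsilon>0$, constants $\tau>1$ and $K>1$ with
$$\frac{K^{-1}\|\xi\|^{-\varepsilon}}{\sum_{i=1}^p\|\xi_i\|^{\alpha_i}}\leq\frac{1}{1+\psi(\xi)}\leq\frac{K}{\sum_{i=1}^p\|\xi_i\|^{\alpha_i}}\quad\text{for all }\|\xi\|>\tau.$$
This is (in fact slightly sharper than) the bound used in \cite[(4.2)]{MX} for symmetric operator stable processes.

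Next I would invoke the general formula for $\dim_{\rm H}M_X(2)$ from Theorem 1 of \cite{LX}, namely
$$\dim_{\rm H}M_X(2)=d-\inf\bigg\{\beta\in(0,d]:\int_{\rr^{2d}}\frac{1}{1+\|\xi^{(1)}+\xi^{(2)}\|^{\beta}}\prod_{j=1}^{2}\frac{1}{1+\psi(\xi^{(j)})}\,d\bar\xi<\infty\bigg\},$$
and replay the convergence analysis of this $2d$-dimensional integral exactly as in the proof of Theorem 2 of \cite{LX}. Every step there uses only the spectral bound above plus elementary power counting; the factors $\|\xi^{(j)}\|^{-\varepsilon}$ on the lower side shift the critical value of $\beta$ by $O(\varepsilon)$, which disappears upon sending $\varepsilon\downarrow 0$. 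The critical $\beta$ that emerges is the same function of the pairs $(\alpha_i,d_i)$ as in the operator stable case.

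The final bookkeeping step is to rewrite this critical $\beta$ in the reindexed sequence $\tilde\alpha_1\ge\tilde\alpha_2\ge\cdots\ge\tilde\alpha_d$ obtained by listing each $\alpha_i$ with multiplicity $d_i$. Case (b) then reads $\tilde\alpha_1(2-\tilde\alpha_1^{-1}-\tilde\alpha_2^{-1}-\tilde\alpha_3^{-1})$, and case (c) follows immediately from $\tilde\alpha_i^{-1}\ge 1/2$, which forces $\sum_{i=1}^{d}\tilde\alpha_i^{-1}\ge d/2\ge 2$ whenever $d\ge 4$ and drives the right-hand side of the Luks--Xiao formula non-positive, giving $M_X(2)=\emptyset$ almost surely by the convention $\inf\emptyset=d$.

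The main obstacle will be case (a) with $d=2$, in which the critical exponent appears as the minimum of two competing expressions. This reflects two distinct dominant regimes in the $2d$-dimensional integral: one in which both arguments $\xi^{(1)},\xi^{(2)}$ concentrate in the fast-decaying block $V_1$ (contribution governed by $\tilde\alpha_1$), and one in which they, or their sum, spread into the slower block $V_2$ (contribution governed by $\tilde\alpha_2$). Setting up the correct domain decomposition of $\rr^{2d}$ and verifying that each regime actually attains its claimed threshold, as done carefully in \cite{LX}, is the delicate point; once this is in place, no new ideas beyond those of \cite{LX} and Corollary \ref{proppsi} are required.
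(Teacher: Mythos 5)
Your proposal is correct and follows essentially the same route as the paper, which likewise proves nothing new here: it simply observes that the only property of $\psi$ used in the proof of Theorem 2 of \cite{LX} is the two-sided spectral bound on $1/(1+\psi(\xi))$, that for a symmetric process $\psi$ is real and nonnegative so Corollary \ref{proppsi} supplies exactly this bound, and that the $O(\varepsilon)$ loss washes out. Your additional remarks on the $d\ge 4$ case and the two competing regimes for $d=2$ go beyond what the paper records, but they are consistent with \cite{LX} and introduce no new argument.
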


\subsection{Concluding remarks}
The results in Section 3 show that many sample path properties of a strictly operator semistable L\'evy processes can be described by the real parts of the eigenvalues of the exponent $E$, and the upper and lower bounds in Theorem \ref{main} play an important role in studying these and other properties. Several interesting questions remain open. For example, Corollary \ref{Co_M2}, as well as \cite{LX}, only provides information for the set of double points, it would be interesting to solve the problems for $k=3$ (for $k \ge 4$, the set of multiple points is almost surely empty). Moreover, Khoshnevisan and Xiao \cite{KX09}, Khoshnevisan Shieh and Xiao \cite{KSX} have studied the existence of intersections of independent L\'evy processes and the Hausdorff dimensions of the sets of intersection times and intersection points, respectively. Their results are expressed in terms of the L\'evy exponents of the processes. For strictly operator semistable L\'evy processes, we believe that these results could be explicitly expressed in terms of the real parts of the eigenvalues of their exponents.

\bibliographystyle{plain}

\end{document}